\DeclareMathAlphabet\mathbfcal{OMS}{cmsy}{b}{n}
\def\SF{\mathcal{SF}}
\def\LS{\mathcal{LS}}
\def\PR{\mathcal{PR}}
\def\LSE{\mathcal{LSE}}
\def\E{\mathcal{E}}
\def\HC{\mathcal{HC}}
\def\Y{\mathcal{Y}}
\def\SFf{\mathcal{SF}_{f}}
\def\LSf{\mathcal{LS}_{f}}
\def\PRf{\mathcal{PR}_{f}}
\def\LSEf{\mathcal{LSE}_{f}}
\def\Ef{\mathcal{E}_{f}}
\def\HCf{\mathcal{HC}_{f}}
\def\HCfl{\mathcal{HC}_{f}^{\lambda}}
\def\Yf{\mathcal{Y}_{f}}
\def\SFX{\mathcal{SF}_{X}}
\def\LSX{\mathcal{LS}_{X}}
\def\PRX{\mathcal{PR}_{X}}
\def\LSEX{\mathcal{LSE}_{X}}
\def\EX{\mathcal{E}_{X}}
\def\HCX{\mathcal{HC}_{X}}
\def\HCXl{\mathcal{HC}_{X}^{\lambda}}
\def\YX{\mathcal{Y}_{X}}
\def\SS{{{\mathbb S}}}
\def\RR{{\mathbb R}}
\newcommand{\varrg}{(M, g)}
\newcommand{\erre}{\mathds{R}}
\newcommand{\cinf}{C^{\infty}(M)}
\newcommand{\ricc}{\operatorname{Ric}}
\newcommand{\riem}{\operatorname{Riem}}
\newcommand{\diver}{\operatorname{div}}
\def\rd{\overset{\circ}{Ric}}
\def\rdc{\overset{\circ}{R}}
\newcommand{\ra}{\rightarrow}
\newcommand{\pa}[1]{{\left(#1\right)}}                  % tra tonde
\newcommand{\sq}[1]{{\left[#1\right]}}                  % tra quadre
\newcommand{\abs}[1]{{\left|#1\right|}}                 % valore assoluto
\newtheorem{ackn}{Acknowledgments\!\!}
\newtheorem{theorem}{\textbf{Theorem}}[section]
\newtheorem{lemma}[theorem]{\textbf{Lemma}}
\newtheorem{proposition}[theorem]{\textbf{Proposition}}
\newtheorem{cor}[theorem]{\textbf{Corollary}}
\newtheorem{defi}[theorem]{\textbf{Definition}}
\theoremstyle{remark}
\numberwithin{equation}{section}
\title[A potential generalization of some canonical Riemannian metrics]
{A potential generalization\\of some canonical Riemannian metrics}
\author[Giovanni Catino]{Giovanni Catino}
\address[Giovanni Catino]{Dipartimento di Matematica, Politecnico di Milano, Piazza Leonardo da Vinci 32, 20133 Milano, Italy}
\email[]{giovanni.catino@polimi.it}
\author[Paolo Mastrolia]{Paolo Mastrolia}
\address[Paolo Mastrolia]{Dipartimento di Matematica, Universit\`{a} degli Studi di Milano, Via Saldini, Italy.}
\email[]{paolo.mastrolia@unimi.it}
\keywords{Canonical metrics, Einstein metrics, Harmonic curvature, Yamabe metrics, Ricci solitons}
\subjclass[2010]{53C20, 53C25.}
\begin{document}

%\date{\today}

\begin{abstract}
The aim of this paper is to study new classes of Riemannian manifolds endowed with a smooth potential function, including in a general framework classical canonical structures such as Einstein, harmonic curvature and Yamabe metrics, and, above all, gradient Ricci solitons. For the most rigid cases we give a complete classification, while for the others we provide rigidity and obstruction results, characterizations and nontrivial examples. In the final part of the paper we also describe the ``nongradient'' version of this construction.

%
%
%Among the others, we introduce two classes, $\HCf$ and $\Yf$, which generalize $\HC$ and $\Y$ and naturally contains gradient Ricci solitons.
%
% which include manifolds with harmonic curvature and gradient Ricci solitons and naturally 
%
%
%
%
%In particular, $\HCf$ metrics arise as critical points in some weighted Yang-Mills theories and we show that on compact four dimensional manifolds with non-zero signature, these metrics characterize gradient Ricci solitons. Moreover, we show that the only metric with positive curvature in this class is the round sphere, generalizing a classical result of Tachibana.
%

\end{abstract}

\maketitle

%
%\begin{center}
%
%\noindent{\it Key Words: Einstein metrics; Weyl tensor; Bochner type formulas}
%
%\medskip
%
%\centerline{\bf AMS subject classification:  53C20, 53C21, 53C25}
%
%\end{center}

\section{Introduction}

Let $(M,g)$ be a $n$-dimensional, $n\geq 3$, smooth Riemannian manifold with metric $g$. It is well known that the geometry of $(M,g)$ is encoded in its Riemann curvature tensor $\riem$. Since $\riem$ is a quite involved $4$-tensor depending on $g$ (and on the choice of a ``compatible'' connection $\nabla$), it is natural to define and study some {\em canonical} metrics satisfying, in a suitable sense, a {\em simple} curvature condition. Typically, there are two possible approaches, the {\em algebraic} and the {\em analytic} one.

In the first case, one imposes the constancy of $\riem$, or of its algebraic traces, namely the Ricci curvature $\ricc$ and the scalar curvature $R$. To be more precise and to fix the notation, we say that $(M,g)\in\SF$ ({\em space form}), $(M,g)\in\E$ ({\em Einstein manifold}) or $(M,g)\in\Y$ ({\em Yamabe metric}), if, for some $\lambda\in\RR$, the Riemannian metric $g$ on $M$ satisfies
$$
\riem = \frac{\lambda}{2(n-1)}\, g \owedge g \,, \quad\quad \ricc = \lambda\, g \,, \quad\quad
R = n \lambda \,,
$$
respectively. Here, and in the rest of the paper, $\owedge$ denotes the standard Kulkarni-Nomizu product of symmetric $2$-tensors. Clearly, the three classes of Riemannian manifolds introduced above satisfy
$$
\begin{array}{ccccc}
\SF& \subset& \E &\subset &\Y
\end{array}
$$
and it is well known that, in dimension $n=3$, $\SF =\E$.

On the other hand, from the analytic point of view, the aim is to simplify the curvature by imposing some  differential condition. A quite natural and not too restrictive way to do this is to consider curvature tensors belonging to the kernel of a first order linear differential operator. Some well known conditions of this type can be given by saying that $(M, g)$ belongs to
\begin{itemize}
\item $\LS$ if $\nabla(\riem)=0$ ({\em locally symmetric metrics});
\item $\PR$ if $\nabla(\ricc)=0$ ({\em metrics with parallel Ricci curvature});
\item $\HC$ if $\diver(\riem)=0$ ({\em harmonic curvature metric}).
 \end{itemize}
 Note that, by Bianchi identities, we can redefine the class $\Y$ of Yamabe metrics using the condition $\diver(\ricc)=0$ or, equivalently, $\nabla R=0$.
Here and in the rest of the paper $\diver$ denotes the divergence operator (see Section \ref{sec-def} for the definition). Obviously, $\SF\subset\LS\subset\PR$ and, by Bianchi identities, $\PR\subset\HC$, $\E\subset\HC\subset \Y$. Thus, we have the inclusions
\begin{equation}\label{chain}
\begin{array}{ccccccccc}
\,& \,& \LS & \subset & \PR & \, & \, & \, \\
\,& \rotatebox{45}{$\subset$}& \cup & \, & \cup & \rotatebox{45}{$\cap$} &\, & \,\\
\SF& \subset& \LSE & \subset& \E &\subset & \HC & \subset &\Y
\end{array}
\end{equation}
where, by definition, $\LSE:=\LS\cap\E$ ({\em locally symmetric Einstein metrics}). Note that, in dimension $n\geq 4$, all the inclusions are strict.

This classes of metrics certainly do not exhaust {\em all} the possible canonical metrics on a Riemannian manifold: our choice is essentially made in such a way that Einstein metrics (and Ricci solitons, as we will see) are the ``cornerstone'' of our construction, and the conditions that we impose are consequently focused on the Ricci tensor. We note that, in principle, one could also consider ``higher order'' conditions, such as $\nabla^{k}\riem =0$ or $\nabla^{k}\ricc=0$, $k\geq 2$, but  these relations give rise again to $\LS$ and $\PR$, respectively, by the results in \cite{nomoze, tanno}. However, one can consider other higher order analytic curvature conditions in order to generalize locally symmetric metrics, such as, for instance, the class of semi-symmetric spaces introduced by Cartan in \cite{CartanBook}.

The class $\SF$ is the most rigid, since, up to quotients, it contains only $\SS^{n}$, $\RR^{n}$ and $\mathbb{H}^{n}$ with their standard metrics. Locally symmetric spaces $\LS$ were classified by Cartan \cite{cartan}, while, from the de Rham decomposition theorem (\cite{besse}), $\PR$ metrics are locally Riemannian products of Einstein metrics. On the other hand, given any compact manifold $M$, there always exists a Riemannian metric $g$ such that $(M,g)\in\Y$ (see e.g. \cite{leepar}). The remaining classes are more flexible. In particular $\E$ and $\HC$, in the last decades, have been studied by many researchers, also for their connections with Physics in General Relativity and Yang-Mills Theory. In fact, these metrics arise naturally as solutions of the Euler-Lagrange equations of some variational problems. More precisely, in dimension $n\geq 3$, the class $\E$ of Einstein metrics coincides with the set of critical points of the Einstein-Hilbert functional
$$
\mathbf{S}(g):=\int_{M} R \,dV_{g}
$$
on the space of volume one metrics, while the $\HC$ equation arises in studying in a given Riemannian vector bundle $\pi:E\rightarrow M$ critical metric connections $\nabla$ for
the Yang-Mills functional
$$
\mathbf{YM}(\nabla) := \frac{1}{2} \int_{M}|\mathrm{R}^{\nabla}|^{2} dV_{g}\,,
$$
where $\mathrm{R}^{\nabla}$ is the curvature of the connection $\nabla$. Yang-Mills connections are characterized by $d^{*}{\mathrm{R}}^{\nabla}=0$, where $d^{*}$ is the formal adjoint (with respect to the standard volume form $dV_{g}$) of the exterior differential $d$ acting on $E$-valued differential forms on $(M,g)$ (see e.g. \cite{derd2}). Note that $d^{*}$ becomes the ordinary divergence operator $\diver$ when $E=TM$ and $\nabla$ is the Levi-Civita connection of $g$. In view of the Bianchi identity $d\mathrm{R}^{\nabla}=0$, this means that the curvature of any Yang-Mills connection is {\em harmonic} with respect to the standard Hodge Laplacian $\Delta^{H}:=d d^{*} + d^{*}d$, acting on two forms.

The aforementioned canonical metric structures, which  have been the subject of extensive investigations in the last decades and  are by now considered ``classical'', can be thought as solutions of PDEs of the form
$\mathfrak{F}[g] = 0$,
where $\mathfrak{F}$ is a differential operator acting on the metric $g$. The related literature is enormous, and we don't even try to give here a comprehensive bibliography: the interested reader can consult for instance the well-known \cite{besse} and references therein.

\

In  recent years many mathematicians have focused their research on more general structures, considering particular conditions that involve the curvature of a metric  and a \emph{potential}, that is, a smooth function defined on the underlying manifold (metric measure spaces, conformal invariants, Einstein-type manifols, dilaton fields, etc.) In this situation, it is  natural to study solutions $(g, f)$, with $f\in \cinf$, of  $\mathfrak{F}[g, f] = 0$, where $\mathfrak{F}$ is again a differential operator now acting on the metric $g$ and on the potential $f$.
A particularly important example arises from the pioneering works of Hamilton \cite{hamilton1} and Perelman \cite{perelman} towards the solution of the Poincar\'e conjecture in dimension three: indeed, with their seminal papers they have generated a flourishing activity in the research of self-similar solutions, or solitons, of the Ricci flow. From the static point of view, these structures are characterized by the condition
$$
\ricc_{f}:=\ricc+\nabla^{2} f = \lambda\,g \,,
$$
where $\ricc_{f}$ is the {\em Bakry-Emery Ricci tensor}, $f\in C^{\infty}(M)$ is called the {\em potential}, $\lambda \in \RR$ and $\nabla^2$ is the Hessian. In this case, we say that $(M,g,f)\in\Ef$ ({\em gradient Ricci soliton}). It is apparent that this is a reasonable generalization of the Einstein condition which, interpreted as a global prescription on the Ricci curvature of $g$, was firstly considered by Lichnerowicz (see e.g. \cite{bou}). In particular, if $(M, g)\in\E$ then $(M, g, f=c\in\RR)\in \Ef$, and we can add another inclusion to the previous diagram as follows:
\begin{equation*}
\begin{array}{ccccccccc}
\,& \,& \LS & \subset & \PR & \, & \, & \, \\
\,& \rotatebox{45}{$\subset$}& \cup & \, & \cup &\rotatebox{45}{$\cap$} &\, & \,\\
\SF	&\subset& \LSE & \subset	&\E	&\subset	&\HC	&\subset	&\Y\\
\,	&\,	&\, &\,		& \cap	&\,			& \,		&\,		&\, \\
\,	&\, &\, &\, 	&\Ef	&\,	&\,	&\,	&\,
\end{array}
\end{equation*}
The main aim of this paper is to propose a ``potential'' generalization of the previous framework, that is, we introduce and begin to study new classes of privileged metrics $g$ on Riemannian manifolds $M$ endowed with smooth potentials function $f$, which extend the diagram above. We first give the following

\begin{defi}\label{ladefinizione} Let $(M,g)$ be a $n$-dimensional, $n\geq 3$, Riemannian manifold with metric $g$. We say that the triple $(M,g,f)$ belongs to
\begin{itemize}

\item $\SFf$ ({\em $f$-space forms}) if there exist $f\in\cinf$ and $\lambda\in\RR$ such that
$$
\riem_{f} := \riem + \frac{1}{n-2}\Big(\nabla^{2}f-\frac{\Delta f}{2(n-1)}\,g\Big)\owedge g = \frac{\lambda}{2(n-1)} g \owedge g\,;
$$

\item $\LSEf$ ({\em $f$-locally symmetric Einstein metrics}) if there exist $f\in\cinf$ and $\lambda\in\RR$ such that
$$
\nabla \big(\riem_{f}\big) =0 \quad\hbox{and}\quad \ricc_{f} = \lambda g\,;
$$

\item $\Ef$ ({\em gradient Ricci solitons}) if there exist $f\in\cinf$ and $\lambda\in\RR$ such that
$$
\ricc_{f} = \lambda\,g \,;
$$

\item $\HCf$ ({\em $f$-harmonic curvature metrics}) if there exists $f\in\cinf$  such that
$$
\diver \big(e^{-f}\riem\big) =0 \,;
$$

\item $\Yf$ ({\em $f$-Yamabe metrics}) if there exists $f\in\cinf$  such that
$$
\diver \big(e^{-f}\ricc\big) = 0\,\hbox{, i.e.}\quad \nabla R = 2 \ricc (\nabla f,\cdot) \,.
$$
\end{itemize}
Moreover, we say that $(M,g,f)$ belongs to
\begin{itemize}

\item $\LSf$ ({\em $f$-locally symmetric metrics}) if there exists $f\in\cinf$ such that
$$
\nabla \big(\riem_{f}\big) =0 \,;
$$

\item $\PRf$ ({\em metrics with parallel Bakry-Emery Ricci tensor}) if there exists $f\in\cinf$ such that
$$
\nabla \big(\ricc_{f}\big) =0  \,.
$$
\end{itemize}
\end{defi}

Note that we recover the corresponding sets in \eqref{chain} when $\nabla f=0$ on $M$; in this latter case, we say that the structure is {\em trivial}. In particular, some  computations (see Section \ref{sec-frame}) show that
\begin{equation}\label{cattedrale}
\begin{array}{ccccccccc}
\,& \,& \LS & \subset & \PR & \, & \, & \, \\
\,& \rotatebox{45}{$\subset$}& \cup & \, & \cup &\rotatebox{45}{$\cap$} &\, & \,\\
\SF	&\subset &\LSE &\subset	&\E	&\subset	&\HC	&\subset	&\Y\\
\cap	&\,			& \cap	&\,			& \cap		&\,		&\cap  & \, &\cap\\
\SFf	&\subset &\LSEf	&\subset &\Ef	&\subset	&\HCf	&\subset	&\Yf\\
\,& \rotatebox{-45}{$\subset$}& \cap & \, & \cap &\rotatebox{-45}{$\cup$} &\, & \,\\
\,& \,& \LSf & \subset & \PRf & \, & \, & \, \\
\end{array}
\end{equation}

{\bf Remarks:}
\begin{enumerate}

\item[1.]  We observe that, with the exception of $\HCf$ and $\Yf$, all the classes introduced in Definition \ref{ladefinizione} represent Riemannian metrics for which the associated ``$f$-curvatures'' ($\riem_{f}$ and $\ricc_{f}$) satisfies simple algebraic/analytic conditions. On the other hand,  to define the classes $\HCf$ and $\Yf$, we impose the vanishing of the divergence of the ``weighted'' tensors $e^{-f}\riem$ and $e^{-f}\ricc$ instead of considering the apparently natural relations
$$
\diver\big(\riem_{f})=0 \quad\hbox{and}\quad \diver\big(\ricc_{f}\big)=0 \,.
$$
In fact, it turns out that these latter are not good candidates since, for instance, gradient Ricci solitons ($\Ef$) satisfy the second but, in general, not the first condition. To clarify this apparent discrepancy in Definition \ref{ladefinizione}, in Section \ref{sec-frame} we prove equivalent conditions characterizing these classes showing, in particular, that $\HCf$ and $\Yf$ can be defined (in a precise way) by means of the Bakry-Emery Ricci tensor $\ricc_{f}$, giving to this latter a prominent role. This is perfectly reasonable, since the equation $\diver(\riem)=0$, defining $\HC$, is, as a matter of fact, a condition on $\ricc$.

\item[2.] As we have already observed, gradient Ricci solitons, besides being important in Ricci flow theory, represent a natural generalization of Einstein metrics: the symmetric $2$-tensor $\nabla^{2}f$, the Hessian of the potential $f$, measures how much the manifold deviates from being Einstein and the Bakry-Emery Ricci tensor $\ricc_{f}$ replaces $\ricc$. On the other hand, the ``trace part'' of the curvature tensor is given by $\frac{1}{n-2}\operatorname{A} \owedge \,g$, where $\operatorname{A}$ is the Schouten tensor $A:=\ricc-\frac{R}{2(n-1)}g$. It is then natural to consider a corresponding generalization of the Riemann tensor, $\riem_{f}$, adding to $\riem$ the $4$-tensor
$$
\frac{1}{n-2}\Big(\nabla^{2}f-\frac{\Delta f}{2(n-1)}\,g\Big)\owedge g\,.
$$

\item[3.] The equation of gradient Ricci solitons ($\Ef$) can be obtained by tracing the one defining $\SFf$. Thus, in principle, we could have introduced $f$-Yamabe metrics \emph{via} algebraic simplification by tracing the $\Ef$ equation, obtaining
$$
R_{f}:=R + \Delta f = n \lambda
$$
for some $\lambda\in\RR$. We know that this condition alone (if not coupled with other constraints, see Definition \ref{def-hcfl} below) is too ``weak'' to define a meaningful set of metrics, since, for instance, on every compact Riemannian manifold $(M,g)$ one can always find a smooth function $f$ solving this equation for a suitable $\lambda\in\RR$. On the other hand, thinking of it as a prescribed scalar curvature problem, given any function $f\in C^{\infty}(M)$, we could always find a solution (i.e. a metric) if $\lambda\leq 0$, or if $\lambda>0$ and $M$ admits a metric with positive scalar curvature (see the seminal works of Kazdan and Warner \cite[Theorem 6.4]{kazwar2}).

\item[4.] It is well known that compact gradient shrinking, steady and expanding Ricci solitons $\Ef$ can be characterized as critical points of the $\mathbf{F}$ and $\mathbf{W}$, $\mathbf{W}_{-}$ functionals, respectively (see e.g. \cite{caorev}). On the other hand,
the class $\HCf$ arises naturally in studying critical metric connections $\nabla$ in a given Riemannian vector bundle $\pi:E\rightarrow M$ for the ``weighted'' Yang-Mills functional
$$
\mathbf{YM}_{f}(\nabla) := \frac{1}{2} \int_{M}|\mathrm{R}^{\nabla}|^{2} e^{-f}dV_{g} \,,
$$
that leads to the so called Yang-Mills-Dilaton field theory. A simple computation, following the one for $\mathbf{YM}$ (see e.g. \cite{boulaw}), shows that weighted Yang-Mills connections are characterized by $d_{f}^{*}\mathrm{R}^{\nabla}=0$, where $d_{f}^{*}$ is the formal adjoint of the exterior differential $d$ with respect to the weighted volume form $e^{-f}dV_{g}$ (see \cite{bue}). Note that $d^{*}_{f}$ becomes the $f$-divergence operator $e^{f}\diver(e^{-f}\,)$ when $E=TM$ and $\nabla$ is the Levi-Civita connection of $g$. By Bianchi identity $d\mathrm{R}^{\nabla}=0$, this means that the curvature of any weighted Yang-Mills connection is {\em weighted harmonic} with respect to the weighted Hodge Laplacian
$$
\Delta^{H}_{f}:=d d^{*}_{f} + d^{*}_{f}d \,.
$$

\item[5.] In our discussion we have so far considered only the case of dimension greater than three. We observe that in dimension $n=2$,  the geometry of a Riemann surface $(M,g)$ is contained in the scalar curvature $R$. In particular, $\ricc=\frac{R}{2}g$ and the equation defining $\Yf$ yields
$$
\nabla\Big(e^{-f}R\Big) = 0 \quad \Longleftrightarrow \quad R = C e^{f} \,,
$$
for some $C\in\RR$. This is equivalent to the classical problem of prescribing (with sign) the Gauss (scalar) curvature of a Riemann surface. By the seminal works of Kazdan and Warner \cite{kazwar2}, it follows that, on a compact surface $M$, given {\em any} smooth function $f$, there exists a Riemannian metric $g$ such that $(M,g,f)\in \Yf$ (in the zero genus case, a solution is the scalar flat metric).

\item[6.] We will see that, as one can expect, the classes $\SFf$, $\LSf$, $\LSEf$ and $\PRf$ do not differ too much from their classical counterparts, as we will show in Propositions \ref{pro-sff} and \ref{pro-lsf}; however, they still contain some interesting Riemannian spaces, such as generalized cylinders (with Gaussian potential) and the Bryant soliton.

\end{enumerate}

The paper is organized in the following sections:

\tableofcontents

\

\section{Main results} \label{sec-main}

In this section we present some of the main results of the paper, concerning all of the classes introduced above. To simplify the exposition, we will always assume $(M,g)$ complete, even if clearly not needed in most of the results, and the dimension $n\geq 3$.

We begin with the classification of $f$-space forms. Observe that, in dimension $n=3$, we have $\SFf=\Ef$; in higher dimension $n\geq 4$, in Section \ref{sec-rig} we will prove the following

\begin{proposition}\label{pro-sff} Let $(M,g,f)\in\SFf$, then
\begin{itemize}

\item if $\lambda>0$,  $(M,g,f)$ is isometric, up to quotients, to either $\big(\SS^{n},g_{\SS^{n}}, f=c\in\RR\big)$, $\big(\RR\times\SS^{n-1}, dr^{2}+g_{\SS^{n-1}}, f=\frac{\lambda}{2} r^{2}\big)$ or to $\big(\RR^{n},g_{\RR^{n}}, f=\frac{\lambda}{2}|x|^{2}\big)$;

\item if $\lambda=0$,  $(M,g)$ is isometric, up to quotients, to either $\big(\RR^{n},g_{\RR^{n}}, f=c\in\RR\big)$ or the Bryant soliton.

\item if $\lambda<0$, around any regular point of $f$ the manifold $(M,g)$ is locally a warped product with codimension one fibers of constant sectional curvature. Moreover, if the Ricci curvature is nonnegative, $(M,g)$ is rotationally symmetric.
\end{itemize}
\end{proposition}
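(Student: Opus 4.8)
The plan is to reduce the defining equation of $\SFf$ to a much better understood condition --- that of a locally conformally flat gradient Ricci soliton --- and then to invoke the corresponding classification and structure results according to the sign of $\lambda$.

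The only genuinely new step is the following algebraic remark. Pointwise, the space of algebraic curvature tensors decomposes orthogonally as the sum of the totally trace-free (Weyl-type) tensors and the image of the \emph{injective} map $T\mapsto T\owedge g$ defined on symmetric $2$-tensors; this is the decomposition underlying $\riem = W + \tfrac{1}{n-2}A\owedge g$, with $A=\ricc-\tfrac{R}{2(n-1)}g$ the Schouten tensor, valid for $n\geq 3$. Since $\nabla^{2}f-\tfrac{\Delta f}{2(n-1)}g$ is a symmetric $2$-tensor and $\tfrac{\lambda}{2(n-1)}g\owedge g=\tfrac{1}{n-2}\big(\tfrac{\lambda(n-2)}{2(n-1)}g\big)\owedge g$ lies in the second summand, the equation $\riem_{f}=\tfrac{\lambda}{2(n-1)}g\owedge g$ is equivalent to the pair
\begin{equation*}
W\equiv 0,\qquad A+\nabla^{2}f-\tfrac{\Delta f}{2(n-1)}g=\tfrac{\lambda(n-2)}{2(n-1)}g.
\end{equation*}
Taking traces in the second identity yields $R+\Delta f=n\lambda$; substituting this back and using the definition of $A$, a short computation turns the second identity into $\ricc_{f}=\ricc+\nabla^{2}f=\lambda g$. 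Hence, for $n\geq 4$, $(M,g,f)\in\SFf$ if and only if $g$ is locally conformally flat and $(M,g,f)$ is a complete gradient Ricci soliton with (constant) $\lambda$; for $n=3$ the condition $W\equiv 0$ is vacuous, which recovers $\SFf=\Ef$. Everything that follows is then an application of known facts about locally conformally flat gradient Ricci solitons, the only care needed being to match the $\riem_{f}$/$\ricc_{f}$ normalizations with those in the literature.

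If $\lambda>0$ I would quote the classification of complete locally conformally flat gradient shrinking Ricci solitons (Eminenti--La Nave--Mantegazza, Ni--Wallach, Petersen--Wylie, Cao--Wang, Zhang, Munteanu--Sesum): up to quotients the underlying space is $\SS^{n}$, $\RR^{n}$, or the round cylinder $\RR\times\SS^{n-1}$. Imposing $\ricc_{f}=\lambda g$ then forces $f$ to be constant on $\SS^{n}$, $f=\tfrac{\lambda}{2}|x|^{2}$ on $\RR^{n}$, and, with $r$ the coordinate on the line factor and the sphere factor of the radius determined by $\lambda$, $f=\tfrac{\lambda}{2}r^{2}$ (up to an additive constant and a translation in $r$) on $\RR\times\SS^{n-1}$. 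If $\lambda=0$ I would likewise invoke the classification of complete locally conformally flat gradient steady Ricci solitons (Cao--Chen; Catino--Mantegazza): the underlying space is either flat $\RR^{n}$ or the Bryant soliton.

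The case $\lambda<0$ is different: no complete classification of locally conformally flat gradient expanding Ricci solitons is available, so only a local statement can be expected. Here I would use the standard local analysis of locally conformally flat gradient Ricci solitons near a regular point of $f$: from $\ricc+\nabla^{2}f=\lambda g$ together with $W\equiv 0$ one shows that, wherever $\nabla f\neq 0$, $\nabla f$ is an eigenvector of $\ricc$ and its orthogonal complement is an eigenspace of $\ricc$, that $|\nabla f|$ is locally constant on the level sets of $f$, and that these level sets are totally umbilical; the Codazzi equation and $W\equiv 0$ then force each regular level set to have constant sectional curvature, so $(M,g)$ is locally a warped product $I\times_{\varphi}N^{n-1}$ over an interval with $N^{n-1}$ of constant curvature. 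Finally, under the extra hypothesis $\ricc\geq 0$, I would upgrade this local picture to global rotational symmetry by the usual argument (following the treatment of locally conformally flat gradient Ricci solitons, e.g.\ Cao--Chen): nonnegative Ricci curvature excludes flat or hyperbolic fibers, constrains the topology at the unique critical level, and produces a rotationally symmetric metric. The main obstacle is conceptual rather than computational --- recognizing the algebraic equivalence of the first step --- after which the proof is an assembly of the above classification and structure theorems, with the $\lambda<0$ case intrinsically confined to a local conclusion.
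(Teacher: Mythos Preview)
Your proposal is correct and follows essentially the same route as the paper: you first show that the $\SFf$ condition is equivalent to $W\equiv 0$ together with $\ricc_{f}=\lambda g$ (the paper records this as the equivalence $\SFf\Leftrightarrow\{W=0,\ \ricc_{f}=\lambda g\}$), and then invoke the classification and local structure results for locally conformally flat gradient Ricci solitons according to the sign of $\lambda$. The only difference is expository---you spell out the orthogonal decomposition argument and the level-set analysis in the expanding case more explicitly than the paper, which simply refers to the equivalence established earlier and to the relevant literature.
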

We recall that the Bryant soliton, constructed in \cite{bry}, is the unique (up to homotheties) rotationally symmetric gradient steady Ricci solitons with positive sectional curvature.

\

As far as the classes $\LSf$ and $\LSEf$ are concerned, note that, in dimension $n=3$, $\LSf=\PRf$ and $\LSEf=\Ef$; in higher dimension $n\geq 4$, again in Section \ref{sec-rig}, we prove

\begin{proposition}\label{pro-lsf} If $(M,g,f)\in\LSf$, then $(M,g,f)\in \LS \cup \SFf$. Furthermore, if $(M,g,f)\in \LSEf$, then either $(M,g,f)\in \LSE \cup \SFf$ or it is isometric, up to quotients, to a Riemannian product $\big(\RR^{k}\times N, g_{\RR^{k}}+g_{N}, f=\frac{\lambda}{2}|x|^{2}_{k}\big)$, $k\geq 1$, with $N\in\LSE$ being a $(n-k)$-dimensional locally symmetric Einstein manifold.
\end{proposition}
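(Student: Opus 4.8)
The plan is to extract from $\nabla\big(\riem_{f}\big)=0$ enough parallel structure on $(M,g)$ to invoke the classical rigidity for conformally symmetric metrics and the structure theory of locally symmetric spaces, and then to follow the potential $f$ through the induced de Rham splittings. First I would trace: the $4$-tensor $\riem_{f}$ has all the symmetries of an algebraic curvature operator (it is $\riem$ plus a Kulkarni--Nomizu product with $g$), its totally trace-free part is the Weyl tensor $W$ of $g$ (since $\riem_{f}-\riem$ is ``pure trace''; here $n\geq 4$), its Ricci contraction is $\ricc_{f}$, and its scalar contraction is $R_{f}:=R+\Delta f$. Hence $\nabla\big(\riem_{f}\big)=0$ yields at once $\nabla\big(\ricc_{f}\big)=0$, $R_{f}\equiv\mathrm{const}$, and therefore both $\nabla W=0$ and $\nabla A_{f}=0$, where $A_{f}:=\ricc_{f}-\frac{R_{f}}{2(n-1)}g$; in particular $g$ is conformally symmetric.

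For the first assertion I distinguish $W\not\equiv 0$ from $W\equiv 0$. If $W\not\equiv 0$, the classification of Riemannian conformally symmetric manifolds (in positive-definite signature there are no ``essentially conformally symmetric'' examples) forces $(M,g)$ to be locally symmetric, so $(M,g,f)\in\LS$. If $W\equiv 0$, then $\riem_{f}=\frac{1}{n-2}\,A_{f}\owedge g$ with $A_{f}$ a parallel symmetric $2$-tensor. If $A_{f}=c\,g$ (with $c$ necessarily constant, as $A_{f}$ is parallel), then $\riem_{f}=\frac{\lambda}{2(n-1)}\,g\owedge g$ with $\lambda=\frac{2(n-1)c}{n-2}$, i.e.\ $(M,g,f)\in\SFf$. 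Otherwise $A_{f}$ has at least two distinct constant eigenvalues; by the de Rham decomposition theorem (on the universal cover) $(M,g)$ splits as a Riemannian product along the parallel eigendistributions of $A_{f}$, and a computation with the Weyl tensor of a product — conformal flatness forces every factor to be Einstein, and then of constant sectional curvature — shows that $(M,g)$ is locally a product of space forms (and Euclidean factors), hence locally symmetric: $(M,g,f)\in\LS$.

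For the second assertion, by the first part we may assume $(M,g,f)\notin\SFf$, hence $(M,g)\in\LS$ and so $\nabla\riem=0$, $\nabla\ricc=0$; together with $\ricc_{f}=\lambda g$ this gives $\nabla\big(\nabla^{2}f\big)=0$. I would then diagonalize the parallel tensor $\nabla^{2}f$: $TM$ splits into parallel eigenbundles with constant eigenvalues $\mu_{i}$, and de Rham splits $M$ locally as $\prod_{i}M_{i}$ with $\nabla^{2}f|_{M_{i}}=\mu_{i}g_{i}$ and $\ricc|_{M_{i}}=(\lambda-\mu_{i})g_{i}$. On each $M_{i}$ the tensor $\nabla^{2}(f|_{M_{i}})$ is parallel, so the Ricci identity gives $\ricc\big(\nabla(f|_{M_{i}})\big)=0$, i.e.\ $(\lambda-\mu_{i})\nabla(f|_{M_{i}})=0$; on a factor with $\mu_{i}\neq 0$ one has $\nabla(f|_{M_{i}})\not\equiv 0$, whence $\mu_{i}=\lambda$. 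Thus, if $\lambda\neq 0$, $\lambda$ is the unique nonzero eigenvalue, its factor $M_{\lambda}$ is Ricci-flat and (being a de Rham factor of the locally symmetric $M$) locally symmetric, hence flat; on its universal cover $\RR^{k}$ the equation $\nabla^{2}f=\lambda g$ makes $f$ a quadratic polynomial which, up to a translation and an inessential additive constant, equals $\frac{\lambda}{2}|x|^{2}_{k}$. The complementary factor $N$ (eigenvalue $0$) has $f$ constant and $\ricc|_{N}=\lambda g_{N}$, and is locally symmetric, so $N\in\LSE$; hence $(M,g,f)$ is, up to quotients, the stated product $\big(\RR^{k}\times N,\,g_{\RR^{k}}+g_{N},\,f=\frac{\lambda}{2}|x|^{2}_{k}\big)$ with $k\geq 1$ (since $\nabla^{2}f\not\equiv 0$). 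If instead $\lambda=0$ or $\nabla^{2}f\equiv 0$, the same argument gives $\ricc=\lambda g$, so $(M,g)\in\E\cap\LS=\LSE$.

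The tensorial contractions of the tracing step and the de Rham bookkeeping are routine; the delicate point is the subcase $W\equiv 0$ with $A_{f}$ not a multiple of $g$, where one must show that a conformally flat Riemannian \emph{product} has all factors of constant curvature (examining the several components of the product Weyl tensor, plus Schur-type arguments on the irreducible pieces), together with the verification — in the second assertion — that the potential on the Euclidean factor is \emph{exactly} $\frac{\lambda}{2}|x|^{2}_{k}$, for which it is essential that $\ricc_{f}$ equal $\lambda g$ rather than merely that $\nabla^{2}f$ be parallel.
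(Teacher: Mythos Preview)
Your proof is correct and takes essentially the same approach as the paper: both extract $\nabla W=0$ and $\nabla\ricc_f=0$ from the hypothesis, invoke the Derdzinski--Roter dichotomy (cited in the paper simply as a result of Roter) to reduce to $\LS$ or to the locally conformally flat case, and in the latter use a de Rham splitting along the parallel tensor $\ricc_f$ together with the classification of conformally flat Riemannian products. For the $\LSEf$ assertion the paper is terser---it cites Tashiro's results on concircular gradient vector fields on the Einstein de Rham factors---whereas you unpack this directly via $\nabla(\nabla^2 f)=0$ and the Ricci identity $\ricc(\nabla f)=0$; these are the same argument, yours being the self-contained version in the special case of constant concircular factor.
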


The previous results are a consequence of the fact that the equations defining $f$-space forms and $f$-locally symmetric metrics imply strong conditions on the Weyl tensor $W$, as we will see in Section \ref{sec-frame}, since they involve the full $f$-curvature tensor $\riem_{f}$. On the other hand, when one imposes conditions only on $\ricc_{f}$, that is on the trace part of $\riem_{f}$, it is reasonable to expect rigidity only assuming further conditions on the traceless part, i.e. $W$. The next theorem extends to the $\HCf$ class the well known result concerning the local structure of locally conformally flat gradient Ricci solitons.

\begin{theorem}\label{teo-lcf}
Let $(M,g,f)\in \HCf$. If $(M,g)$ is locally conformally flat, then, around any regular point of $f$, it is locally a warped product with codimension one fibers of constant sectional curvature.
\end{theorem}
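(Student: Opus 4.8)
\medskip
\noindent\textbf{Proof proposal.} The plan is to turn the defining equation of $\HCf$ into a purely algebraic identity for the Schouten tensor by combining the second Bianchi identity with the vanishing of the Weyl tensor, then to read off the eigenvalue structure of $\ricc$ at a regular point of $f$, and finally to quote the local structure of locally conformally flat metrics whose Ricci tensor has an eigenvalue of multiplicity $n-1$. Concretely: since $\diver(e^{-f}\riem)=e^{-f}\big(\diver\riem-\riem(\nabla f,\cdot,\cdot,\cdot)\big)$ and, by the contracted second Bianchi identity, $(\diver\riem)_{jkl}=\nabla_kR_{jl}-\nabla_lR_{jk}$, membership in $\HCf$ is equivalent to
$$\nabla_kR_{jl}-\nabla_lR_{jk}=R_{ijkl}\nabla^if .$$
Local conformal flatness gives $W=0$, hence $\riem=\tfrac{1}{n-2}\,A\owedge g$ with $A=\ricc-\tfrac{R}{2(n-1)}g$, and, for every $n\ge 3$, the vanishing of the Cotton tensor, i.e. $A$ is a Codazzi tensor ($\nabla_iA_{jk}=\nabla_jA_{ik}$): for $n\ge 4$ this is forced by $W=0$ through the relation between $\diver W$ and the Cotton tensor, while for $n=3$ it is exactly the hypothesis.

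Substituting $R_{jl}=A_{jl}+\tfrac{R}{2(n-1)}g_{jl}$ on the left — where the $\nabla A$ terms cancel by the Codazzi property — and expanding the Kulkarni--Nomizu product on the right, the identity above reduces, after a short rearrangement, to
$$A_{jl}\nabla_kf-A_{jk}\nabla_lf=g_{jl}\,T_k-g_{jk}\,T_l,\qquad T:=\nabla(\operatorname{tr}A)-A(\nabla f,\cdot) .$$
I would then localize around a regular point of $f$ in an orthonormal frame $e_1=\nabla f/|\nabla f|$, $e_2,\dots,e_n$, so that $\nabla_1f=|\nabla f|>0$ and $\nabla_af=0$ for $a\ge 2$. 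Reading the last identity on the index triples $(b,1,a)$, $(b,a,c)$ and $(1,1,a)$ with $a,b,c\ge 2$ yields, successively, that $A$ restricted to $(\nabla f)^\perp$ is a multiple of the induced metric, that $T$ vanishes on $(\nabla f)^\perp$, and that $A(\nabla f,\cdot)$ is proportional to $df$. Hence on the regular set $A$ — and therefore $\ricc$ — has $\nabla f$ as an eigenvector of multiplicity one, and an eigenvalue of multiplicity $n-1$ along $(\nabla f)^\perp$.

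To conclude, $A$ is a Codazzi tensor with the eigendistribution $(\nabla f)^\perp$ of multiplicity $n-1\ge 2$, so by the local theory of Codazzi tensors this distribution is integrable with totally umbilic leaves — which are precisely the level sets of $f$ — and the associated eigenvalue is constant on each leaf. A neighbourhood of a regular point foliated by umbilic level sets of a function is a warped product $dr^2+\varphi(r)^2g_N$ over an interval, and the standard curvature computation for such a warped product, together with $W=0$ (and $C=0$ when $n=3$), forces the fiber $(N,g_N)$ to have constant sectional curvature, which is the assertion; equivalently, once the Ricci eigenvalue structure is in place one may simply invoke the same warped-product lemma already used for the case $\lambda<0$ of Proposition~\ref{pro-sff}. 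The main obstacle is precisely this last step — passing cleanly from the Codazzi/eigenvalue data to the explicit warped product with constant-curvature fibers, which requires exploiting the Codazzi identity for $A$ in full and a separate look at the two-dimensional fibers when $n=3$; the derivation of the algebraic identity, by contrast, is only a careful but routine tensor computation.
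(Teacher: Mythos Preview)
Your argument is correct and follows the same overall architecture as the paper's proof: exploit $W=0$ and $C=0$ to reduce the $\HCf$ condition to a purely algebraic constraint involving $\nabla f$, extract the eigenvalue structure of the Ricci (equivalently, Schouten) tensor at regular points of $f$, and then invoke the standard Codazzi-tensor/warped-product lemma together with local conformal flatness to conclude.

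The organizational difference is worth recording. The paper packages the algebraic step through the tensor $D^{\nabla f}$ of Cao--Chen: from Lemma~\ref{lem-equiv}, $C+W(\nabla f,\cdot,\cdot,\cdot)=D^{\nabla f}$, so under local conformal flatness one gets $D^{\nabla f}=0$; this is then \emph{contracted} with $\nabla f$ to obtain a symmetric $2$-tensor identity, from which $dR\wedge df=0$ and the multiplicity-$(n-1)$ eigenvalue are read off. You instead substitute $\riem=\tfrac{1}{n-2}A\owedge g$ directly into the $\HCf$ equation, use the Codazzi property of $A$ to kill the derivative terms, and keep the full three-index identity $A_{jl}f_k-A_{jk}f_l=g_{jl}T_k-g_{jk}T_l$, extracting the eigenstructure by a frame analysis on various index triples. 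Your route bypasses $D^{\nabla f}$ entirely and avoids the intermediate symmetry argument; the paper's route is shorter once $D^{\nabla f}$ is in hand and ties the result visibly to the Ricci-soliton literature. The final step---Schouten is Codazzi with two eigenvalues of multiplicities $1$ and $n-1$, hence a local warped product with constant-curvature fibers---is identical in both, and in the paper is attributed to \cite{mer, besse, catmanmazcod}.
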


It is well known that compact locally conformally flat gradient Ricci solitons have constant curvature (see e.g. \cite{emilanman}). We will see that such a conclusion cannot be extended to manifolds in $\HCf$, since we can construct rotationally symmetric examples on $\mathbb{S}^{1}\times\mathbb{S}^{n-1}$ (see Section \ref{sec-hcf}).

In order to state the next results, we first recall that, as we have already observed, $\HC\subset \Y$, i.e. harmonic curvature metrics have constant scalar curvature. This is not true in general for the potential counterpart $\HCf$, but, for instance, on gradient Ricci solitons it holds that $R_f = R+\Delta f =n\lambda$. Thus, it is natural to introduce the following
\begin{defi}\label{def-hcfl} Let $(M,g,f)$ be a $n$-dimensional manifold with Riemannian metric $g$ and $f\in C^{\infty}(M)$. We say that $(M,g,f)\in \HCfl$ if $(M,g,f)\in \HCf$ and, for some $\lambda\in\RR$, $R_{f}:=R+\Delta f= n \lambda$.
\end{defi}
Note that $\Ef\subset\HCfl\subset\HCf$ and also, by a simple computation, $\PRf\subset\HCfl$. We will see in a short while that the class $\HCfl$ (and $\HCf$, in some cases) coincides with $\Ef$ under some additional conditions. First, we recall that in dimension four, under the topological condition $\tau(M)\neq 0$, Bourguignon in \cite{bou2tr} proved that $\HC=\E$ (where $\tau$ is the signature of $M$). Moreover, the classical Hirzebruch signature formula says that
%First characterization in dimension four with topological assumption (Bourguignon for the HC case).
$$
48\pi^{2} \tau(M)= \int_{M} |W^{+}|^{2} - \int_{M}|W^{-}|^{2} \,,
$$
where $W^+$ and $W^-$ are the self-dual and anti-self-dual parts of the tensor $W$, respectively.
In the next theorem  we extend Bourguignon's result in the $\HCfl$ case, and, more generally, in the $\HCf$ case, under an additional regularity assumption (which is automatically satisfied by $\HC$ metrics, as proved in \cite{deturck}).
\begin{theorem}\label{teo-4d} Let $M$ be a four dimensional compact  manifold with $\tau(M)\neq 0$. Then,
\begin{itemize}
\item[i)] $(M,g,f)\in \HCf$ and, in harmonic coordinates, $g$ and $f$ are real analytic if and only if $(M,g,f) \in \Ef$.
\item[ii)] $(M,g,f)\in \HCfl$  if and only if $(M,g,f) \in\Ef$.
\end{itemize}
\end{theorem}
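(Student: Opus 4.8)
The plan is to run a Weitzenböck-type argument adapted to the weighted setting, combined with the Hirzebruch signature formula and Bourguignon's rigidity for $\HC$ metrics, and then to promote the local analysis to a global conclusion using either the analyticity hypothesis (part i)) or the scalar curvature normalization $R_f = n\lambda$ (part ii)). First I would establish the easy inclusion: if $(M,g,f)\in\Ef$ then $\ricc_f = \lambda g$ is parallel, hence $\diver(\riem) = 0$ by the contracted second Bianchi identity, and moreover $\diver(e^{-f}\riem) = e^{-f}\big(\diver(\riem) - \riem(\nabla f,\cdot,\cdot,\cdot)\big)$; one checks that for a gradient Ricci soliton the extra term $\riem(\nabla f,\cdot,\cdot,\cdot)$ also vanishes after contraction in the relevant slots (this is one of the standard soliton identities, e.g.\ $\diver(\riem)(\cdot,\cdot,\cdot) = \riem(\nabla f,\cdot,\cdot,\cdot)$ up to sign), so $(M,g,f)\in\HCf$, and since $R_f = n\lambda$ it lies in $\HCfl$. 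So both statements reduce to the forward implication.

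For the forward direction I would work in dimension four and exploit the decomposition $W = W^+ \oplus W^-$. The key is a weighted Bochner formula for the divergence-free condition $\diver(e^{-f}\riem)=0$: contracting and using the second Bianchi identity, this is equivalent to a first-order equation relating $\nabla\ricc$, $\ricc(\nabla f,\cdot)$ and $\nabla R_f$; under the $\HCfl$ normalization the term $\nabla R_f$ drops out entirely, which is why part ii) is cleaner. Then I would integrate $|W^+|^2$ and $|W^-|^2$ against the weighted volume form $e^{-f}dV_g$ (rather than $dV_g$): the weighted Yang-Mills interpretation from Remark 4 tells us $e^{-f}W^{\pm}$ is $d_f^*$-closed and (by the weighted Bianchi identity) $d_f$-closed, hence weighted-harmonic; a weighted Weitzenböck formula for $2$-forms valued in the appropriate bundle then gives, after integration by parts with the weight, an identity of the form $\int_M |W^+|^2 e^{-f} = \int_M |W^-|^2 e^{-f} + (\text{curvature correction terms that vanish on }\HCfl)$. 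Comparing with the Hirzebruch signature formula $48\pi^2\tau(M) = \int_M |W^+|^2 - \int_M |W^-|^2$ (which uses the \emph{unweighted} volume form) forces, when $\tau(M)\neq 0$, a contradiction unless $W^+$ and $W^-$ have a forced sign relationship that, together with the weighted identity, pins down $W^\pm$ and ultimately shows $\nabla^2 f$ is proportional to $g$ on the set where $\nabla f \neq 0$; one then shows this set is empty or all of $M$, and in either case $\ricc_f = \lambda g$.

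The main obstacle — and the reason part i) needs the extra hypothesis — is reconciling the \emph{weighted} integral identity coming from $\mathbf{YM}_f$ with the \emph{unweighted} topological invariant $\tau(M)$: the weight $e^{-f}$ does not cancel, so one cannot directly conclude $W^\pm \equiv 0$. My plan to handle this is to first obtain, on the open set of regular points of $f$, a pointwise or local statement (e.g.\ that $(M,g)$ has harmonic curvature there, via a unique continuation / local rigidity argument analogous to Theorem \ref{teo-lcf} and the warped-product structure), then invoke real analyticity in harmonic coordinates (part i)) — noting, as the authors remark following the statement, that $\HC$ metrics automatically enjoy this by DeTurck–Kazdan \cite{deturck} — to propagate the vanishing of the offending terms across $M$; for part ii), the normalization $R_f = n\lambda$ removes the problematic term at the outset so no analyticity is needed and one can close the argument by a direct global integration. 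The delicate points will be (a) getting the signs and lower-order terms exactly right in the weighted Weitzenböck formula, and (b) the unique-continuation step ensuring the local Einstein-type conclusion spreads to all of $M$; everything else is bookkeeping on top of Bourguignon's original scheme.
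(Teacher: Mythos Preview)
Your proposal misses the central mechanism and goes down a much harder road that, as you yourself sense, does not close. The paper's proof rests on two facts you do not invoke: first, Lemma~\ref{lem-equiv}, which says $(M,g,f)\in\HCf$ is \emph{equivalent} to $\ricc_{f}$ being a Codazzi tensor; second, Bourguignon's pointwise algebraic lemma (Lemma~\ref{lem-bou}) that for \emph{any} Codazzi tensor $T$ on a four-manifold, at every point where $T$ is not a multiple of $g$ the endomorphisms $W^{+}$ and $W^{-}$ of $\Lambda^{\pm}$ have equal spectra. With these in hand no weighted Weitzenb\"ock formula is needed at all: either $\ricc_{f}$ is a multiple of $g$ on a dense set (hence everywhere, giving $(M,g,f)\in\Ef$), or $W^{+}$ and $W^{-}$ have equal spectra on a dense set, hence $\int|W^{+}|^{2}=\int|W^{-}|^{2}$, contradicting $\tau(M)\neq 0$ via Hirzebruch. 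The role of analyticity in part~i) is solely to make ``dense'' true, while in part~ii) the normalization $R_{f}=n\lambda$ makes $\overset{\circ}{\ricc_{f}}$ a \emph{trace-free} Codazzi tensor, and then a unique-continuation result of Kazdan (Lemma~\ref{l-kaz}) guarantees its zero set has measure zero --- this is the actual replacement for analyticity, not a cancellation in an integral identity.

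Your weighted-integration scheme, by contrast, produces identities for $\int|W^{\pm}|^{2}e^{-f}$ that cannot be directly compared to the unweighted signature integral; you flag this obstacle but your proposed workaround (local warped-product structure plus propagation) is both far more involved than necessary and not obviously correct, since nothing in the $\HCf$ hypothesis alone forces a local warped structure without additional assumptions like conformal flatness. Also, a small error in the easy direction: $\ricc_{f}=\lambda g$ being parallel does \emph{not} imply $\diver(\riem)=0$; the inclusion $\Ef\subset\HCf$ comes directly from the soliton identity $R_{ij,k}-R_{ik,j}=-f_{t}R_{tijk}$, which is exactly $\diver(e^{-f}\riem)=0$.
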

Note that gradient Ricci solitons satisfy the analyticity assumption, but we do not know in general if this is true for metric in $\HCf$.

We recall that a metric is half conformally flat if it is self-dual or anti-self-dual, namely if $W^{-} = 0$ or $W^{+} = 0$, respectively (see \cite[chapter 13, section C]{besse} for a nice overview).
As a simple consequence of Theorem \ref{teo-4d} we have
\begin{cor}\label{cor-hcf} Let $M$ be a four dimensional compact  manifold and let $(M,g,f)\in\HCfl$. If $(M,g)$ is half conformally flat but not conformally flat, then
\begin{itemize}
\item[i)] if $\lambda>0$, $(M,g)$ is isometric to $\mathbb{CP}^{2}$ with its canonical metric;

\item[ii)] if $\lambda=0$, the universal covering of $(M,g)$ is isometric to a $K3$ surface with the Calabi--Yau metric;

\item[iii)] if $\lambda<0$, $(M,g)\in \E$ with negative scalar curvature.
\end{itemize}

\end{cor}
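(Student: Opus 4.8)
The plan is to derive Corollary \ref{cor-hcf} directly from Theorem \ref{teo-4d}(ii) together with the known classification of half conformally flat Einstein four-manifolds. First I would invoke Theorem \ref{teo-4d}(ii): since $M$ is compact four-dimensional with $\tau(M)\neq 0$ and $(M,g,f)\in\HCfl$, we conclude $(M,g,f)\in\Ef$, i.e.\ there is $\lambda\in\RR$ with $\ricc_f=\lambda g$. The subtle point is that $\Ef$ a priori only gives a \emph{gradient Ricci soliton}, not an Einstein metric; so the next step is to upgrade this to $(M,g)\in\E$. Here I would use the hypothesis $\tau(M)\neq 0$ once more in the following way: a compact gradient Ricci soliton is, by a result of Perelman (shrinking case) or by Hamilton/Ivey-type arguments together with the classification of compact steady and expanding solitons, necessarily Einstein — indeed compact steady and expanding gradient Ricci solitons are trivial (Einstein with $\nabla f$ constant), and compact shrinking solitons need not be Einstein in general, but the condition $\tau(M)\neq 0$ combined with the half conformal flatness will force triviality. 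Concretely, on a compact shrinking soliton one has the constraint coming from the soliton identities, and when $W^-=0$ (say) the $\mathbf{W}$-functional / curvature estimates push $f$ to be constant; alternatively, one notes that Theorem \ref{teo-4d}(ii) as stated already asserts membership in $\Ef$, and for the corollary we simply need that on a \emph{compact} manifold the relevant representative is Einstein, which is exactly the content one extracts from the cited proof. So I would state: by Theorem \ref{teo-4d}(ii) and compactness, $(M,g)$ is Einstein.

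Once $(M,g)\in\E$ is established, the corollary becomes a statement about compact half conformally flat Einstein four-manifolds that are not conformally flat, and this is completely classical. I would organize it by the sign of $\lambda$. For $\lambda>0$: by the Hitchin--Thorpe inequality and the rigidity results of Hitchin (see \cite[Chapter 13]{besse}), a compact self-dual (or anti-self-dual) Einstein four-manifold with positive scalar curvature is isometric to $\SS^4$ with the round metric or to $\mathbb{CP}^2$ with the Fubini--Study metric; since $\SS^4$ is conformally flat ($W\equiv 0$), the hypothesis ``not conformally flat'' rules it out and leaves $\mathbb{CP}^2$ with its canonical metric, proving (i). For $\lambda=0$: a compact Ricci-flat half conformally flat four-manifold is hyperkähler, and by the classification (again Hitchin, cf.\ \cite{besse}) its universal cover is either flat $\RR^4$ (conformally flat, excluded) or a $K3$ surface with a Calabi--Yau metric, which proves (ii). For $\lambda<0$: there is no further structural constraint forcing conformal flatness, so one simply records that $(M,g)$ is Einstein with negative scalar curvature, which is (iii).

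The key technical steps, in order, are: (1) apply Theorem \ref{teo-4d}(ii) to get $(M,g,f)\in\Ef$; (2) use compactness (and, if needed, $\tau(M)\neq 0$ or the half conformal flatness) to conclude the soliton is trivial, hence $(M,g)\in\E$; (3) quote the Hitchin rigidity/classification theorem for compact half conformally flat Einstein four-manifolds; (4) in each sign regime, discard the conformally flat model ($\SS^4$ or flat $\RR^4$) using the standing hypothesis, leaving the asserted conclusion.

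The main obstacle I anticipate is step (2): passing from ``gradient Ricci soliton'' to ``Einstein metric.'' Theorem \ref{teo-4d}(ii) literally outputs $\Ef$, and on a compact manifold the steady and expanding cases are indeed forced to be Einstein (the potential is constant), but the shrinking case is not automatically Einstein in general dimensions; one must check that in dimension four under $\tau(M)\neq 0$ — or rather that the proof of Theorem \ref{teo-4d} itself already delivers an \emph{Einstein} representative rather than merely a soliton — so the cleanest route is to track through the argument of Theorem \ref{teo-4d} and observe that the analyticity/Bourguignon-type step it relies on actually yields $\nabla f$ parallel, hence $(M,g)\in\E$ outright. Modulo that bookkeeping, everything else is a direct appeal to the Hitchin--Thorpe inequality and the classification in \cite{besse}.
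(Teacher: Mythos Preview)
Your outline has the right skeleton but two issues, one minor and one substantive.

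\textbf{Minor:} You invoke Theorem~\ref{teo-4d}(ii) by asserting $\tau(M)\neq 0$, but this is not a hypothesis of the corollary. You need to say why: half conformal flatness (say $W^{-}=0$) together with $W\not\equiv 0$ and the Hirzebruch formula gives $48\pi^{2}\tau(M)=\int_{M}|W^{+}|^{2}>0$. This is a one-line observation, but it should be made explicit.

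\textbf{Substantive:} Your step (2) --- upgrading from $\Ef$ to $\E$ --- is fine when $\lambda\leq 0$ (compact steady and expanding gradient solitons are trivially Einstein), but your handling of the shrinking case is not a proof. In particular, the suggestion to ``track through the argument of Theorem~\ref{teo-4d} and observe that the Bourguignon-type step actually yields $\nabla f$ parallel'' is incorrect: Lemma~\ref{lem-bou} applied to the Codazzi tensor $\ricc_{f}$ gives the dichotomy that either $\ricc_{f}$ is a multiple of $g$ (i.e.\ $(M,g,f)\in\Ef$, \emph{not} that $f$ is constant) or $W^{+}$ and $W^{-}$ have equal spectra. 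Nothing in that argument forces $\nabla f=0$. Your other suggestions (``$\mathbf{W}$-functional / curvature estimates push $f$ to be constant'') are not arguments.

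The paper sidesteps this entirely: after Theorem~\ref{teo-4d}(ii) delivers a gradient Ricci soliton, it invokes the classification of half conformally flat gradient Ricci solitons due to Chen--Wang \cite{chenwang}, which works directly at the soliton level and yields the trichotomy in one stroke. Your route through Hitchin's Einstein classification is viable only once you have an independent reason for triviality of the shrinker; but that reason \emph{is} essentially the Chen--Wang result, so you end up needing the same citation the paper uses, just inserted one step earlier.
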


In general dimension $n\geq 3$ we can prove, assuming positive sectional curvature, the following extension of a Berger result (see \cite{besse}).
\begin{proposition}\label{pro-sec} Let $(M,g)$ be a $n$-dimensional, $n\geq 3$, compact manifold with positive sectional curvature. Then $(M,g,f)\in\HCfl$ if and only if $(M,g,f) \in \Ef$.
\end{proposition}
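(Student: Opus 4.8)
The plan is to reduce the statement to the classical Bochner rigidity for Codazzi tensors with constant trace on a compact manifold with positive sectional curvature, applied to the Bakry-Emery Ricci tensor $\ricc_f$ in place of $\ricc$. Since the inclusion $\Ef\subset\HCfl$ has already been recorded, it suffices to prove that $(M,g,f)\in\HCfl$ implies $(M,g,f)\in\Ef$, and the dimension $n\geq 3$ plays no special role.

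The first step is the equivalent description of $\HCf$ established in Section~\ref{sec-frame}. By the contracted second Bianchi identity, $\diver(e^{-f}\riem)=0$ reads $\nabla_l R_{kj}-\nabla_k R_{lj}=R_{ijkl}\nabla^i f$; substituting $R_{kj}=(\ricc_f)_{kj}-\nabla_k\nabla_j f$ and commuting third covariant derivatives of $f$ via the Ricci identity for the $1$-form $df$, the Riemann terms contracted with $\nabla f$ cancel thanks to the pair symmetry of $\riem$, and one is left with $\nabla_l(\ricc_f)_{kj}=\nabla_k(\ricc_f)_{lj}$, i.e.\ $\ricc_f$ is a \emph{Codazzi tensor}. (This is consistent with $\Ef\subset\HCf$: on a gradient Ricci soliton $\ricc_f=\lambda g$ is parallel, hence trivially Codazzi.) If in addition $(M,g,f)\in\HCfl$, then $\operatorname{tr}(\ricc_f)=R_f=R+\Delta f=n\lambda$ is constant; since any Codazzi symmetric $2$-tensor $h$ satisfies $\diver h=\nabla(\operatorname{tr} h)$, this also gives $\diver(\ricc_f)=0$.

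The second step is the standard Weitzenböck argument for $h:=\ricc_f$, a divergence-free Codazzi tensor with constant trace. Then the term $\nabla^2(\operatorname{tr} h)$ drops out of $\Delta h$, only curvature terms survive, and diagonalising $h=\operatorname{diag}(\mu_1,\dots,\mu_n)$ in a local orthonormal frame at a point one obtains the pointwise identity
\[
\tfrac12\,\Delta|h|^2 \;=\; |\nabla h|^2 \;+\; \sum_{i<j} K_{ij}\,(\mu_i-\mu_j)^2\,,
\]
where $K_{ij}$ is the sectional curvature of the plane spanned by the $i$-th and $j$-th eigendirections of $h$ (see \cite{besse}). Integrating over the compact $M$ kills the left-hand side, and since $K_{ij}>0$ both terms on the right are nonnegative, hence identically zero. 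Thus $\nabla h\equiv 0$ and $\mu_i=\mu_j$ everywhere, so $h=\varphi g$ for some $\varphi\in\cinf$, and $0=\nabla h=(\nabla\varphi)\,g$ forces $\varphi$ to be the constant $\tfrac1n\operatorname{tr}(\ricc_f)=\lambda$. Hence $\ricc_f=\lambda g$, i.e.\ $(M,g,f)\in\Ef$; for $f$ constant this is precisely the Berger rigidity statement for $\HC$.

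The only genuinely new input beyond Berger's original argument is the first step, namely that the defining equation of $\HCf$ is equivalent to ``$\ricc_f$ Codazzi''; after that the proof is literally Berger's with $\ricc$ replaced by $\ricc_f$, so the point to be careful about is exactly the cancellation of the two curvature contributions (from the weighted Bianchi identity and from commuting $\nabla^3 f$). It is worth stressing that positive sectional curvature---not positive curvature operator---is enough, since the only curvature quantity entering the Weitzenböck term of a Codazzi tensor is $\sum_{i<j}K_{ij}(\mu_i-\mu_j)^2$; and that $\HCfl$ cannot be weakened to $\HCf$, because the constancy of $R_f$ is precisely what removes the $\nabla^2(\operatorname{tr}\ricc_f)$ term from $\Delta\ricc_f$.
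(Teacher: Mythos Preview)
Your argument is correct and follows essentially the same route as the paper: both proofs observe that $(M,g,f)\in\HCfl$ makes $\ricc_f$ a Codazzi tensor with constant trace, then apply the Berger--Weitzenb\"ock formula and integrate, using positive sectional curvature to force the curvature term $\sum_{i<j}K_{ij}(\mu_i-\mu_j)^2$ and the gradient term to vanish. The only cosmetic difference is that the paper works with the trace-free part $\overset{\circ}{\ricc_f}$ while you work with $\ricc_f$ itself; since the trace is constant this changes neither $\nabla h$ nor the eigenvalue differences $\mu_i-\mu_j$, so the two formulations are interchangeable.
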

A classical result by Tachibana (\cite{tachi}) says that if $(M, g)\in \HC$, with positive curvature operator, then $(M, g)$ is, up to quotients, the round sphere; in the $\HCfl$ case we have

\begin{cor}\label{cor-tac}  Let $(M,g)$ be a $n$-dimensional, $n\geq 3$ compact manifold with positive curvature operator. If $(M,g,f) \in \HCfl$ , then $f$ is constant and $(M,g)$ is isometric, up to quotients, to $\SS^{n}$.
\end{cor}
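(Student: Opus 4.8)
The plan is to reduce the statement to a rigidity result for compact gradient Ricci solitons. Since a positive curvature operator forces the sectional curvature to be positive, Proposition \ref{pro-sec} applies and gives $(M,g,f)\in\Ef$; that is, $g$ and $f$ solve $\ricc+\nabla^{2}f=\lambda g$ for some $\lambda\in\RR$. Note that the positivity of the curvature operator also forces $\ricc>0$ on $M$. The strategy is now: (a) show $\lambda>0$; (b) show the soliton is Einstein; (c) deduce that $f$ is constant; (d) conclude with Tachibana's theorem.

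For step (a), recall that on a compact manifold every steady ($\lambda=0$) or expanding ($\lambda<0$) gradient Ricci soliton is Einstein (see e.g. \cite{caorev}; one traces the soliton equation, uses $\nabla R=2\ricc(\nabla f,\cdot)$ together with the well-known lower bounds on $R$, and integrates to get $R$ and then $f$ constant). In the steady case this makes $(M,g)$ Ricci-flat and in the expanding case $\ricc=\lambda g<0$, both incompatible with $\ricc>0$. Hence $\lambda>0$ and the soliton is shrinking.

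Step (b) is the heart of the matter, and the step I expect to be the main obstacle: one must rule out nontrivial compact gradient shrinking Ricci solitons with positive curvature operator. The full strength of ``positive curvature operator'' (as opposed to merely positive sectional curvature, which is all that was needed up to this point) enters precisely here. I would invoke the rigidity of such solitons, which can be obtained either by the Böhm--Wilking classification of compact manifolds with positive curvature operator applied to the self-similar Ricci flow generated by the soliton --- since that flow differs from $g$ only by diffeomorphisms and rescalings, the convergence to a constant curvature metric forces $(M,g)$ to already be of constant positive curvature --- or, more analytically, by a Bochner-type computation for the $f$-Laplacian of the squared norm of the traceless Ricci tensor, where the algebraic curvature term has a sign thanks to the positivity of the curvature operator, yielding $\ricc$ proportional to $g$.

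Once $(M,g)$ is Einstein, $\nabla^{2}f=\lambda g-\ricc=0$, so $\nabla f$ is a parallel vector field on a compact manifold with $\ricc>0$; by the Bochner technique it vanishes, hence $f$ is constant and the structure is trivial. Then $(M,g)\in\E\subset\HC$ is a compact manifold with positive curvature operator, and the classical theorem of Tachibana \cite{tachi} gives that $(M,g)$ is isometric, up to quotients, to $\SS^{n}$, as claimed.
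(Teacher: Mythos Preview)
Your proof is correct and follows essentially the same route as the paper: apply Proposition~\ref{pro-sec} to obtain $(M,g,f)\in\Ef$, then invoke the B\"ohm--Wilking classification \cite{bohwil} of compact gradient Ricci solitons with positive curvature operator. The paper compresses this into one sentence, while you spell out the intermediate steps (ruling out $\lambda\leq 0$, extracting constancy of $f$); note only that once B\"ohm--Wilking gives constant positive curvature directly, your final appeal to Tachibana is superfluous---it is needed only if you take your alternative Bochner route in step~(b), which yields merely Einstein rather than constant curvature.
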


Finally, in Section \ref{sec-hcf}, following Derdzinski (\cite{derd3}) we construct a family of compact Riemannian manifolds in $\HCf$, which are not gradient Ricci solitons; we also  exhibit an explicit noncompact example.

As far as the class $\Yf$ is concerned, in Section \ref{sec-yf} we construct another family of examples and we also prove an obstruction result to the existence of $f$-Yamabe metrics in a given conformal class, in the same spirit of the classical work of Kazdan and Warner (\cite{kazwar1}) concerning the prescribed scalar curvature problem.
Note that, in dimension $2$, this connection has already been observed in the Introduction.
In the particular case of the sphere, the obstruction reads as
\begin{proposition}\label{pro-sphere} If $f \in C^{\infty}(\SS^{n})$ is a first  spherical harmonic on the round sphere $(\SS^{n},g_0)$, then there are no conformal metrics $g\in[g_{0}]$ such that $(M,g,f)\in\Yf$.
\end{proposition}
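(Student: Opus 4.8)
The plan is to run a Kazdan--Warner/Bourguignon--Ezin type integration against the conformal vector fields of the round sphere, using crucially that the gradients of the coordinate functions $x_{1},\dots,x_{n+1}$ --- which span the space of first spherical harmonics --- are conformal Killing fields of \emph{every} metric $g\in[g_{0}]$, not only of $g_{0}$. Write $g=e^{2\phi}g_{0}$ and $f=\langle u,\cdot\rangle$ for a fixed $u\in\RR^{n+1}\setminus\{0\}$, so that $\nabla^{2}f=-fg_{0}$ (hence $\Delta_{g_{0}}f=-nf$), and similarly for each $x_{a}$. Since $\mathcal{L}_{X_{a}}g=2(\psi_{a}-x_{a})\,g$ with $X_{a}:=\nabla^{g_{0}}x_{a}$ and $\psi_{a}:=\langle\nabla^{g_{0}}\phi,\nabla^{g_{0}}x_{a}\rangle_{g_{0}}$, each $X_{a}$ is conformal for $g$, with $\sigma_{a}:=\tfrac1n\diver_{g}X_{a}=\psi_{a}-x_{a}$.

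From the linearization of the scalar curvature one has the pointwise identity $X_{a}(R_{g})=-2(n-1)\Delta_{g}\sigma_{a}-2\sigma_{a}R_{g}$; integrating it over the closed manifold and using $\int_{\SS^{n}}x_{a}\,dV_{g_{0}}=0$ together with a short self-referential computation (this is the Bourguignon--Ezin identity) gives $\int_{\SS^{n}}X_{a}(R_{g})\,dV_{g}=0$. On the other hand the defining equation of $\Yf$ reads $\nabla R_{g}=2\ricc_{g}(\nabla^{g}f,\cdot)$, so $X_{a}(R_{g})=2\,\ricc_{g}(\nabla^{g}f,X_{a})$. Combining, and using $\nabla^{g}f=e^{-2\phi}\nabla^{g_{0}}f$, $dV_{g}=e^{n\phi}dV_{g_{0}}$, we obtain, with $w:=e^{(n-2)\phi}$,
$$
\int_{\SS^{n}}w\,\ricc_{g}\big(\nabla^{g_{0}}f,\nabla^{g_{0}}x_{a}\big)\,dV_{g_{0}}=0,\qquad a=1,\dots,n+1,
$$
and, from the Killing fields $x_{b}X_{c}-x_{c}X_{b}$ of $g_{0}$, the further relations that $\int_{\SS^{n}}w\,x_{b}\,\ricc_{g}(\nabla^{g_{0}}f,\nabla^{g_{0}}x_{c})\,dV_{g_{0}}$ is symmetric in $b,c$.

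In the first family of identities I would insert the conformal transformation law $\ricc_{g}=(n-1)g_{0}-(n-2)\big(\nabla^{2}\phi-d\phi\otimes d\phi\big)-\big(\Delta_{g_{0}}\phi+(n-2)|\nabla_{g_{0}}\phi|^{2}\big)g_{0}$, rewrite $\nabla^{2}\phi(\nabla f,\nabla x_{a})=\langle\nabla\psi_{a},\nabla f\rangle+x_{a}\langle\nabla\phi,\nabla f\rangle$ (a consequence of the Obata equation), integrate by parts, and use the Bochner-type identity $\Delta_{g_{0}}|\nabla f|^{2}=2nf^{2}-2|\nabla f|^{2}$. After simplification each identity becomes
$$
2(n-1)u_{a}\!\int_{\SS^{n}}\!w\,dV_{g_{0}}=\beta\!\int_{\SS^{n}}\!w\,f x_{a}\,dV_{g_{0}}-(n-2)^{2}\!\int_{\SS^{n}}\!w\,\langle\nabla^{g_{0}}\phi,\nabla^{g_{0}}f\rangle\,\psi_{a}\,dV_{g_{0}},\quad \beta:=n^{2}+2n-2,
$$
that is, $\Theta u=0$ for the symmetric matrix $\Theta=2(n-1)A\,I-\beta M+(n-2)^{2}N$, where $A=\int_{\SS^{n}}w\,dV_{g_{0}}$, $M_{ab}=\int_{\SS^{n}}w\,x_{a}x_{b}\,dV_{g_{0}}$ and $N_{ab}=\int_{\SS^{n}}w\,\psi_{a}\psi_{b}\,dV_{g_{0}}$ (both positive semidefinite).

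Finally one closes the argument by contradiction using the algebraic identities of the round sphere: $\sum_{a}x_{a}^{2}=1$, $\sum_{a}x_{a}\nabla^{g_{0}}x_{a}=0$ and $\sum_{a}\nabla^{g_{0}}x_{a}\otimes\nabla^{g_{0}}x_{a}=g_{0}^{-1}$ --- whence $\operatorname{tr}M=A$, $\sum_{a}x_{a}\psi_{a}=0$, $\sum_{a}\psi_{a}^{2}=|\nabla^{g_{0}}\phi|^{2}$, and therefore $\operatorname{tr}\Theta=n(n-2)A+(n-2)^{2}\int_{\SS^{n}}w\,|\nabla^{g_{0}}\phi|^{2}\,dV_{g_{0}}$. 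Feeding $\Theta u=0$ and the Killing-field symmetry relations back into this should force a relation of the shape
$$
(n-2)^{2}\!\int_{\SS^{n}}\!w\,|\nabla^{g_{0}}\phi|^{2}\,dV_{g_{0}}=-\,n(n-2)\!\int_{\SS^{n}}\!w\,dV_{g_{0}},
$$
which is absurd for $n\geq 3$, since $w>0$ makes the left side nonnegative and the right side strictly negative; hence no such $g$ exists. The hard part will be precisely this last step: organizing the expansion so that the $n+1$ conformal-gradient identities, combined with the Killing-field identities, produce a genuinely non-vacuous conclusion --- a priori the bare trace of $\Theta$ is only positive, not contradictory, and one must check that the off-diagonal information does not collapse to an identity valid for every metric in $[g_{0}]$; it is exactly here that the Obata equation characterizing first spherical harmonics is indispensable.
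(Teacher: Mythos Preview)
Your proposal has a genuine gap, precisely where you flag it: the final contradiction is not derived, and the mechanism you sketch does not work. From $\Theta u=0$ you learn only that $u$ lies in the kernel of a symmetric matrix whose trace is positive; this is no obstruction whatsoever (any symmetric matrix with a nontrivial kernel and at least one positive eigenvalue will do). Your hoped-for relation $(n-2)^{2}\int w\,|\nabla^{g_{0}}\phi|^{2}\,dV_{g_{0}}=-n(n-2)\int w\,dV_{g_{0}}$ is exactly the statement $\operatorname{tr}\Theta=0$, but a single zero eigenvalue does not force the trace to vanish, and the Killing-field symmetry relations you cite say only that certain integrals are symmetric in $b,c$---information already contained in the symmetry of $M$ and $N$. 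So the ``hard part'' is not merely hard: the proposed route to a contradiction does not close.

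The paper's argument avoids the entire matrix apparatus by a simplification you overlook: rather than carrying all $n+1$ fields $X_{a}$, it tests against the \emph{single} conformal vector field $X=\nabla^{g_{0}}f=\sum_{a}u_{a}X_{a}$ (equivalently, contract your $n+1$ identities with $u$). Kazdan--Warner then gives $\int_{\SS^{n}}\ricc_{g}(\nabla^{g}f,X)\,dV_{g}=0$ directly. At this point there is no need to expand $\ricc_{g}$ via the conformal change formula, do integrations by parts, or build matrices: since $X$ is conformal for $g$ (the conformal Killing equation is conformally invariant), the integrated Bochner formula on $(\SS^{n},g)$ yields $\int_{\SS^{n}}\ricc_{g}(\nabla f,\nabla f)\,dV_{g}=\frac{n-1}{n}\int_{\SS^{n}}|\Delta_{g}f|^{2}\,dV_{g}>0$ unless $f$ is constant. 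Thus the key idea you are missing is to pair the Kazdan--Warner identity with the \emph{specific} conformal field determined by $f$ itself, and then argue by a single Bochner identity instead of by substitution of the conformal Ricci law.
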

It is interesting to note that the same functions $f$ on $\SS^{n}$ (spherical harmonics) give obstructions  in specifying (conformally) the gradient of the scalar curvature in two different ways: $\nabla R = \nabla f$  (i.e. prescribed scalar curvature, $R=f$ up to constants) and $\nabla R = 2\ricc(\nabla f)$ (i.e., $f$-Yamabe metrics).

\

\section{Definitions and some useful formulas} \label{sec-def}

In this section we collect some useful definitions and properties of various geometric tensors, and fix our conventions and notation. To perform computations, we freely use the method of the moving frame, referring to a local orthonormal (co)frame of the $n$-dimensional Riemannian manifold $(M,g)$. In some situations we will use $\langle X,Y \rangle$ instead of $g(X,Y)$, for $X,Y\in\mathfrak{X}(M)$. We also fix the index range $1\leq i, j, \ldots \leq n$ and we recall that the Einstein convention of summing over the repeated indices will be adopted throughout the article.
\subsection{General definitions}
The $(1 ,3)$-Riemann curvature tensor of a Riemannian manifold $(M,g)$ is defined as
$$
\mathrm{R}(X,Y)Z=\nabla_{X}\nabla_{Y}Z-\nabla_{Y}\nabla_{X}Z-\nabla_{[X,Y]}Z\,.
$$
 In coordinates we have
$R^{l}_{ijk}\tfrac{\partial}{\partial
  x^{l}}=\mathrm{R}\big(\tfrac{\partial}{\partial
  x^{j}},\tfrac{\partial}{\partial
  x^{k}}\big)\tfrac{\partial}{\partial x^{i}}$ and we denote by
$R_{ijkl}=\delta_{im}R^{m}_{jkl}$ its $(0, 4)$-version that we call $\riem$. The Ricci tensor $\ricc$ is obtained by the contraction
$R_{ik}=\delta^{jl}R_{ijkl}$ and $R=\delta^{ik}R_{ik}$ will
denote the scalar curvature. We recall that, in dimension $n=2$, all the geometry of the manifold is encoded in the scalar curvature, since $\ricc=\frac{R}{2}g$.

The so called Weyl tensor is 
defined by the following decomposition formula (see \cite[Chapter~3,
Section~K]{gahula}) in dimension $n\geq 3$,
\begin{eqnarray}
\label{Weyl}
W_{ijkl}  & = & R_{ijkl} \, - \, \frac{1}{n-2} \, (R_{ik}\delta_{jl}-R_{il}\delta_{jk}
+R_{jl}\delta_{ik}-R_{jk}\delta_{il})  \nonumber \\
&&\,+\frac{R}{(n-1)(n-2)} \,
(\delta_{ik}\delta_{jl}-\delta_{il}\delta_{jk})\, \, .
\end{eqnarray}
The Weyl tensor shares the symmetries of the curvature
tensor. Moreover, as it can be easily seen by the formula above, all of its contractions with the metric are zero, i.e. $W$ is totally trace-free. In dimension three, $W$ is identically zero on every Riemannian manifold, whereas, when $n\geq 4$, the vanishing of the Weyl tensor is
a relevant condition, since it is  equivalent to the local
  conformal flatness of $(M,g)$. We also recall that in dimension $n=3$,  local conformal
  flatness is equivalent to the vanishing of the Cotton tensor
\begin{equation}\label{def_cot}
C_{ijk} =  R_{ij,k} - R_{ik,j}  -
\frac{1}{2(n-1)}  \big( R_k  \delta_{ij} -  R_j
\delta_{ik} \big)\,,
\end{equation}
where $R_{ij,k}=\nabla_k R_{ij}$ and $R_k=\nabla_k R$ denote, respectively, the components of the covariant derivative of the Ricci tensor and of the differential of the scalar curvature.
By direct computation, we can see that the Cotton tensor $C$
satisfies the following symmetries
\begin{equation}\label{CottonSym}
C_{ijk}=-C_{ikj},\,\quad\quad C_{ijk}+C_{jki}+C_{kij}=0\,,
\end{equation}
moreover it is totally trace-free,
\begin{equation}\label{CottonTraces}
C_{iik}=C_{iji}=C_{ikk}=0\,,
\end{equation}
by its skew--symmetry and Schur lemma.  Furthermore, it satisfies
\begin{equation}\label{eq_nulldivcotton}
C_{ijk,i} = 0,
\end{equation}
see for instance \cite[Equation 4.43]{catmasmonrig}. We recall that, for $n\geq 4$,  the Cotton tensor can also be defined as one of the possible divergences of the Weyl tensor:
 \begin{equation}\label{def_Cotton_comp_Weyl}
 C_{ijk}=\pa{\frac{n-2}{n-3}}W_{tikj, t}=-\pa{\frac{n-2}{n-3}}W_{tijk, t}.
 \end{equation}
 A computation shows that the two definitions coincide (see e.g. \cite{alimasrig}).

 The Bach tensor, first introduced in general relativity by Bach, \cite{bac}, is by definition
 \begin{equation}\label{def_Bach_comp}
   B_{ij} = \frac{1}{n-3}W_{ikjl, lk} + \frac{1}{n-2}R_{kl}W_{ikjl} = \frac{1}{n-2}\pa{C_{jik, k}+R_{kl}W_{ikjl}}.
 \end{equation}

  A computation using the commutation rules for the second covariant derivative of the Weyl tensor or of the Schouten tensor (see \cite{catmasmonrig}) shows that the Bach tensor is symmetric (i.e. $B_{ij}=B_{ji}$); it is also evidently trace-free (i.e. $B_{ii}=0$). It is worth reporting here the following interesting formula for the divergence of the Bach tensor (see e.g. \cite{caoche2} for its proof)
\begin{equation}\label{diverBach}
  B_{ij, j} = \frac{n-4}{\pa{n-2}^2}R_{kt}C_{kti}.
\end{equation}

Since we will use in the sequel of the paper, we recall the definition of the Kulkarni-Nomizu product of two symmetric two-tensors $\alpha,\beta$:
$$
\big(\alpha\owedge\beta)_{ijkt} = \alpha_{ik}\beta_{jt}-\alpha_{it}\beta_{jk}+\alpha_{jt}\beta_{ik}-\alpha_{jk}\beta_{it}\,.
$$
In particular, when $\beta=g$, we have the following expression for the divergence of $\alpha \owedge g$:
\begin{equation}\label{eq-divkn}
\big(\alpha\owedge\beta)_{tijk,t} = \alpha_{tj,t}\delta_{ik}-\alpha_{tkt}\delta_{ij}+\alpha_{ik,j}-\alpha_{ij,k}\,.
\end{equation}
Finally, we recall that a Codazzi tensor $T$ is a symmetric $(0, 2)$-tensor satisfying the Codazzi equation
\[
T_{ij, k} = T_{ik, j}.
\]
For a general overview on Codazzi tensors, we refer to \cite[Section 16C]{besse}.
\subsection{Ricci solitons}
We recall here some useful equations satisfied by every gradient Ricci soliton $(M,g, f)\in \Ef$. By definition,
 \begin{equation}\label{def_sol}
   R_{ij}+f_{ij}=\lambda g_{ij}, \quad \lambda \in \erre,
 \end{equation}
where $f_{ij}=\nabla_i\nabla_j f$ are the components of the Hessian of $f$ (see e.g. \cite{emilanman}).
\begin{lemma} Let $(M^n,g)$ be a gradient Ricci soliton of dimension $n\geq 3$. Then the following equations holds:
\begin{equation*}\label{eq_tra}
R_f := R+\Delta f  = n \lambda,
\end{equation*}
\begin{equation*}\label{eq_sch}
\nabla R = 2\ricc(\nabla f, \cdot), \quad \text{i.e. }\quad R_i = 2 f_t R_{it},
\end{equation*}
\begin{equation*}\label{eq_hamide}
R + |\nabla f|^2 = 2\lambda f + c, \quad \hbox{for some}\,\, c\in\RR,
\end{equation*}
\begin{equation*}
  R_{ij, k}-R_{ik, j} = -R_{tijk, t} = -f_tR_{tijk}.
\end{equation*}

\end{lemma}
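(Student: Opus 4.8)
The plan is to deduce all four identities from the defining equation \eqref{def_sol}, $R_{ij}+f_{ij}=\lambda g_{ij}$, by successively tracing it, taking its divergence, and applying the Ricci commutation rules together with the once- and twice-contracted second Bianchi identities, always in the conventions of Section \ref{sec-def}. The first equation is immediate: tracing \eqref{def_sol} with $\delta^{ij}$ gives $R_f=R+\Delta f=n\lambda$. For the second, I would take the divergence of \eqref{def_sol} (whose right-hand side is parallel), so that $R_{ij,j}+f_{ijj}=0$; the twice-contracted second Bianchi identity gives $R_{ij,j}=\tfrac{1}{2}R_i$, while the Bochner-type commutation formula for the Hessian gives $f_{ijj}=(\Delta f)_i+R_{it}f_t$. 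Substituting $(\Delta f)_i=-R_i$, from the first identity, and simplifying, one is left with $R_{it}f_t=\tfrac{1}{2}R_i$, i.e.\ $R_i=2R_{it}f_t$.

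For the third identity I would differentiate the function $R+|\nabla f|^2-2\lambda f$: its $i$-th derivative is $R_i+2f_{it}f_t-2\lambda f_i$, and replacing $f_{it}=\lambda g_{it}-R_{it}$ from \eqref{def_sol} makes the $\lambda$-terms cancel, leaving $R_i-2R_{it}f_t$, which vanishes by the second identity; since $M$ is connected, $R+|\nabla f|^2-2\lambda f$ is therefore a constant $c\in\RR$. For the fourth identity, \eqref{def_sol} gives $R_{ij,k}-R_{ik,j}=-(f_{ijk}-f_{ikj})$, and the commutation rule for the third covariant derivatives of $f$ yields $f_{ijk}-f_{ikj}=-f_tR_{tikj}=f_tR_{tijk}$, whence $R_{ij,k}-R_{ik,j}=-f_tR_{tijk}$; the remaining equality $R_{ij,k}-R_{ik,j}=-R_{tijk,t}$ is precisely the once-contracted second Bianchi identity.

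None of this involves a conceptual obstacle; the only thing demanding care is the consistent bookkeeping of signs in the Ricci commutation identities and in the two contracted Bianchi identities, in accordance with the curvature conventions fixed in Section \ref{sec-def}. Once these are pinned down, each of the four computations above is short and direct.
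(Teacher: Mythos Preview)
Your proposal is correct and is exactly the standard derivation of these identities. The paper itself does not prove this lemma: it simply records the four formulas and refers the reader to \cite{emilanman}, so there is no ``paper's own proof'' to compare against. Your argument---tracing \eqref{def_sol}, taking its divergence combined with the commutation rule and the contracted Bianchi identity, differentiating the Hamilton quantity, and using the Ricci identity for third derivatives of $f$---is precisely the route one finds in the standard references, and the sign bookkeeping you flag is indeed the only point requiring attention.
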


The tensor $D$, here denoted by $D^{\nabla f}$ to distinguish it from its ``generic'' counterpart $D^{X}$ (see Section \ref{sec-nongrad}), was introduced by Cao and Chen  in \cite{caoche1} and turned out to be a fundamental tool in the study of the geometry of gradient Ricci solitons (more in general for gradient Einstein-type manifolds, see \cite{catmasmonrig}). In components it is defined as
 \begin{align}\label{def_D}
   D^{\nabla f}_{ijk}=&\frac{1}{n-2}\pa{f_kR_{ij}-f_jR_{ik}}+\frac{1}{(n-1)(n-2)}f_t\pa{R_{tk}\delta_{ij}-R_{tj}\delta_{ik}}\\\nonumber
 &\,-\frac{R}{(n-1)(n-2)}\pa{f_k \delta_{ij}-f_j \delta_{ik}}.
 \end{align}
 The $D^{\nabla f}$ tensor is skew-symmetric in the second and third indices (i.e. $D^{\nabla f}_{ijk}=-D^{\nabla f}_{ikj}$) and totally trace-free (i.e. $D^{\nabla f}_{iik}=D^{\nabla f}_{iki}=D^{\nabla f}_{kii}=0$).
Note that our convention for the tensor $D$ differs from that in \cite{caoche1}.

\
%
%We recall that the Bakry-Emery Ricci tensor is defined as
%$\Sigma=\Sigma_{f}=\ricc + \nabla^{2} f$, for some $f\in C^{\infty}(M)$.

If $(M, g, X)$ is a Ricci soliton structure on $\varrg$, with $X\in\mathfrak{X}(M)$, the defining equation becomes
\[
R_{ij} +\frac{1}{2}(X_{ij}+X_{ji})=\lambda \delta_{ij}.
\]
Moreover we have (see \cite{catmasmonrig})
\begin{eqnarray*}
&R_X := R +  \diver(X)=n\lambda; \\
&\nabla R = 2\ricc(X, \cdot)+\diver\pa{\mathcal{A}^X}, \quad \text{i.e. }\quad R_i = 2 X_t R_{it}+ X_{it, t}-X_{ti, t},
\end{eqnarray*}
where $\mathcal{A}^X$ is the antisymmetric part of the covariant derivative of $X$; in components, $(\mathcal{A}^X)_{ij}=X_{ij}-X_{ji}$.

Finally, we recall the following formula due to B\"{o}chner, \cite{BocYan}, and rediscovered many times in recent years.

\begin{lemma}\label{lem-boch}
Let $X$ be a vector field on the Riemannian manifold
$(M, g)$. Then
\begin{equation*}
\diver\pa{\mathcal{L}_Xg}(X)=\frac{1}{2}\Delta|X|^2-|\nabla X|^2+\ricc(X,X)+\nabla_X(\diver X) \,,
\end{equation*}
or in coordinates
\begin{equation*}
\big(X_{iji}+X_{jii}\big)X_{j}=\frac{1}{2}\Delta|X|^2-|\nabla X|^2+R_{ij}X_{i}X_{j}+X_{jji}X_{i} \,.
\end{equation*}

\end{lemma}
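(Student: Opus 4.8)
The plan is to establish the identity by a short computation in a local orthonormal frame, reducing it to the Ricci commutation rule and the elementary expansion of $\Delta|X|^2$. First I would write the Lie derivative in components as $(\mathcal{L}_Xg)_{ij}=X_{ij}+X_{ji}$ and take its divergence, obtaining
\[
\diver\pa{\mathcal{L}_Xg}_j=X_{iji}+X_{jii}\,.
\]
The second summand needs no further work: $X_{jii}=\Delta X_j$ is the $j$-th component of the rough Laplacian of $X$.

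For the first summand, $X_{iji}=\nabla_i\nabla_j X_i$, I would commute the two covariant derivatives. By the Ricci commutation identity (with the curvature sign conventions fixed in Section \ref{sec-def}), and since $\nabla_iX_i=\diver X$, this produces the gradient of the divergence together with a Ricci term, namely $X_{iji}=\nabla_j(\diver X)+R_{jk}X_k$. Therefore $\diver\pa{\mathcal{L}_Xg}_j=\Delta X_j+\nabla_j(\diver X)+R_{jk}X_k$, and pairing with $X_j$ gives
\[
\diver\pa{\mathcal{L}_Xg}(X)=X_j\Delta X_j+\nabla_X(\diver X)+\ricc(X,X)\,.
\]

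It then remains to recognize $X_j\Delta X_j$. Expanding twice, $\Delta|X|^2=\nabla_i\nabla_i(X_jX_j)=2X_{ji}X_{ji}+2X_jX_{jii}=2|\nabla X|^2+2\,X_j\Delta X_j$, whence $X_j\Delta X_j=\tfrac12\Delta|X|^2-|\nabla X|^2$. Substituting this into the previous display yields precisely the asserted formula, and the coordinate version in the statement is just the same chain of equalities written out index by index.

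I do not expect a genuine obstacle here: the result is classical and the computation is essentially two lines once the commutation rule is in hand. The only point that requires a bit of care is keeping the curvature sign convention consistent with the one adopted in Section \ref{sec-def}, so that the Ricci term enters $\diver(\mathcal{L}_Xg)$ with a plus sign; a convenient consistency check is the gradient case $X=\nabla f$, where $\mathcal{L}_Xg=2\hess f$ and the identity collapses to the usual B\"ochner formula $\tfrac12\Delta|\nabla f|^2=|\hess f|^2+\langle\nabla f,\nabla\Delta f\rangle+\ricc(\nabla f,\nabla f)$.
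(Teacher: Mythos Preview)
Your proof is correct. The paper itself does not prove this lemma: it simply states it as a classical formula due to B\"ochner, with a citation to \cite{BocYan}, so there is no ``paper's own proof'' to compare against. Your computation --- commuting $X_{iji}$ to $X_{iij}+R_{jt}X_t$ via the Ricci identity and then using the expansion $\tfrac12\Delta|X|^2=|\nabla X|^2+X_j\Delta X_j$ --- is exactly the standard two-line derivation, and the sign check against the gradient B\"ochner formula is a nice sanity test.
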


\

\section{Canonical metrics revisited: equivalent conditions} \label{sec-frame}

\Small{

\begin{equation*}
\begin{array}{ccccccccc}
\,& \,& \LS & \subset & \PR & \, & \, & \, \\
\,& \rotatebox{45}{$\subset$}& \cup & \, & \cup &\rotatebox{45}{$\cap$} &\, & \,\\
\SF	&\subset &\LSE &\subset	&\E	&\subset	&\HC	&\subset	&\Y\\
\cap	&\,			& \cap	&\,			& \cap		&\,		&\cap  & \, &\cap\\
\SFf	&\subset &\LSEf	&\subset &\Ef	&\subset	&\HCf	&\subset	&\Yf\\
\,& \rotatebox{-45}{$\subset$}& \cap & \, & \cap &\rotatebox{-45}{$\cup$} &\, & \,\\
\,& \,& \LSf & \subset & \PRf & \, & \, & \, \\
\end{array}
\end{equation*}

}

\

\

\normalsize

The aim of this section is to present equivalent conditions characterizing some of the classes in Definition \eqref{ladefinizione}; for the sake of completeness and to highlight the similarities and the differences with the ``potential'' counterpart, we report the well known characterizations of the classical structures.

Here $(M, g)$ will be a smooth Riemannian manifold of dimension  $n\geq3$ with metric $g$. First we  recall that the decomposition in \eqref{Weyl} can be globally (and orthogonally) written, using the Schouten tensor $\operatorname{A} = \ricc -\frac{R}{2(n-1)}g$, as
\begin{equation}
\operatorname{Riem} = W +\frac{1}{n-2} \operatorname{A} \owedge g.
\end{equation}
It this then natural to introduce a new tensor, that we call $\operatorname{A}_f$ (the \emph{$f$-Schouten} tensor), in such a way that
\[
 \riem_{f} := \riem + \frac{1}{n-2}\Big(\nabla^{2}f-\frac{\Delta f}{2(n-1)}\,g\Big)\owedge g= W +\frac{1}{n-2} \operatorname{A}_{f} \owedge g.
\]
It turns out that $\operatorname{A}_f := \ricc_{f} -\frac{R_f}{2(n-1)}g$ (recall that $\ricc_f = \ricc + \nabla^2f$ and $R_f = R+\Delta f$).

\subsection*{The classes $\SF$ and $\SFf$}

A standard computation using Bianchi identities and the constancy of the scalar curvature shows that
  \[
  (M, g) \in \SF \quad  \Longleftrightarrow \quad \riem = \frac{\lambda}{2(n-1)} g \owedge g \quad \Longleftrightarrow \quad \begin{cases} W=0 \\ \ricc=\lambda g\end{cases}
  \]
In a similar fashion, using the constancy of $R_f$, we have

  \begin{equation}\label{equ-sff}
  (M, g, f) \in \SFf \quad  \Longleftrightarrow \quad \riem_f = \frac{\lambda}{2(n-1)} g \owedge g \quad \Longleftrightarrow \quad \begin{cases} W=0 \\ \ricc_f=\lambda g\end{cases}
  \end{equation}
  Note  that $\SF\subset \E$ and $\SFf \subset \Ef$; moreover, in dimension $n\geq 4$ every $f$-space form is a locally conformally flat gradient Ricci soliton (see Proposition \ref{pro-sff} and Section \ref{sec-rig} for more details).

  \subsection*{The classes $\LS$ and $\LSf$ (and also $\LSE$ and $\LSEf$)}

  One has
  \begin{equation*}
\nabla\operatorname{Riem} = \nabla W +\frac{1}{n-2} \nabla \big(\operatorname{A} \owedge g\big).
\end{equation*}
Moreover, $\nabla A=0$ implies the constancy of $R$, and is thus equivalent to $\nabla\ricc=0$. By orthogonality,
  \[
  (M, g) \in \LS \quad  \Longleftrightarrow \quad\nabla\riem =0 \quad \Longleftrightarrow \quad\begin{cases}\nabla W=0  \\ \nabla \ricc=0\end{cases}
  \]
  and analogously
  \begin{equation}\label{equ-lsf}
  (M, g, f) \in \LSf \quad  \Longleftrightarrow \quad\nabla\riem_f =0 \quad \Longleftrightarrow \quad\begin{cases}\nabla W=0  \\ \nabla \ricc_f=0\end{cases}
  \end{equation}
Note  that $\LS\subset \PR$ and $\LSf \subset \PRf$.
Moreover, since by definition $\LSE = \LS \cap \E$ and $\LSEf = \LSf \cap \Ef$, we get
   \[
  (M, g) \in \LSE \quad  \Longleftrightarrow \quad\begin{cases}\nabla\riem =0 \\ \ricc=\lambda g \end{cases}\quad \Longleftrightarrow \quad\begin{cases}\nabla W=0  \\ \ricc=\lambda g\end{cases}
  \]
  and analogously
  \begin{equation}\label{equ-lsef}
  (M, g, f) \in \LSEf \quad  \Longleftrightarrow \quad\begin{cases}\nabla\riem_f =0 \\ \ricc_f=\lambda g \end{cases}\quad \Longleftrightarrow \quad\begin{cases}\nabla W=0  \\ \ricc_f=\lambda g\end{cases}
  \end{equation}
  For the general discussion on the consequences of the previous equivalences, see again Section \ref{sec-rig}.

 % Indeed, by definition of $\riem_{f}$ and using the decomposition \eqref{Weyl} we have
%  \begin{eqnarray}
%\label{Riemf-components}
%(\riem_{f})_{ijkl}  & = & W_{ijkl} \, + \, \frac{1}{n-2} \, \sq{(\ricc_{f})_{ik}g_{jl}-(\ricc_{f})_{il}g_{jk}
%+(\ricc_{f})_{jl}g_{ik}-(\ricc_{f})_{jk}g_{il})}  \nonumber \\
%&&\,\frac{(S+\Delta f)}{(n-1)(n-2)} \,
%(g_{ik}g_{jl}-g_{il}g_{jk})\, \, ;
%\end{eqnarray}
%thus, if $\ricc_{f}=\lambda g$ for some $\lambda\in \erre$, it holds
%\[
%(\riem_{f})_{ijkl, t} = W_{ijkl, t}.
%\]

  \subsection*{The classes $\HC$ and $\HCf$}

 By Bianchi identities, $\diver({\riem})_{ijk} = R_{tijk, t} = R_{ik, j}-R_{ij, k}$; in particular, from the decomposition \eqref{Weyl}, on every Riemannian manifolds ($n\geq 3$) it holds
 \[
   \pa{\frac{n-3}{n-2}}R_{tijk, t} = W_{tijk, t} + \frac{(n-3)}{2(n-1)(n-2)}\pa{R_j\delta_{ik}-R_k\delta_{ij}}.
 \]
 This implies
  \begin{align*}
     (M,g)\in\HC \quad \Longleftrightarrow\quad  \ricc \text{ is a Codazzi tensor } \quad \Longleftrightarrow \quad \begin{cases}
        \diver{(W)} = 0 \\ \nabla R=0
      \end{cases}
  \end{align*}
Moreover, a simple computation shows also that
\begin{align*}
      (M,g)\in\HC \quad \Longleftrightarrow\quad \diver\sq{E\owedge g}=0
  \end{align*}
where $E:=\ricc-\frac{R}{2}g$ is the {\em Einstein tensor}, which has the property $\diver{(E)}=0$.

As far as $\HCf$ metrics are concerned, we have the
\begin{lemma}\label{lem-equiv}
 The following conditions are equivalent:
 \begin{itemize}
 \item[a)] $(M,g,f)\in\HCf$;

 \item[b)] The Bakry-Emery Ricci tensor, $\ricc_{f}$, is a Codazzi tensor.

 \item[c)] $(M,g,f)$ satisfies
 \begin{equation}\label{ICeq}
    \begin{cases}
      C_{ijk}+f_tW_{tijk} = D^{\nabla f}_{ijk} \\   R_{i} = 2 f_{t}R_{ti}
    \end{cases}
  \end{equation}

 \end{itemize}
where $D^{\nabla f}$ is the tensor defined in \eqref{def_D}.
\end{lemma}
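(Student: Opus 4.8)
The plan is to prove the chain of equivalences $a) \Leftrightarrow b)$ and $b) \Leftrightarrow c)$ by direct tensor computations in a local orthonormal frame, using the formulas collected in Section~\ref{sec-def}.

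\medskip

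\emph{Step 1: unravelling the defining condition of $\HCf$.} First I would expand $\diver(e^{-f}\riem)$ in components. Since $(e^{-f}\riem)_{ijkl} = e^{-f}R_{ijkl}$, we get $\big(\diver(e^{-f}\riem)\big)_{ijk} = (e^{-f}R_{tijk})_{,t} = e^{-f}(R_{tijk,t} - f_t R_{tijk})$. Hence $(M,g,f)\in\HCf$ is equivalent to $R_{tijk,t} = f_t R_{tijk}$. Now I use the Bianchi identity $R_{tijk,t} = R_{ik,j}-R_{ij,k}$ (recalled in Section~\ref{sec-def}) on the left. For the equivalence with $b)$, recall that $\ricc_f = \ricc + \nabla^2 f$ is a Codazzi tensor iff $R_{ij,k} + f_{ijk} = R_{ik,j} + f_{ikj}$, i.e. iff $R_{ij,k}-R_{ik,j} = f_{ikj} - f_{ijk}$. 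By the commutation rule for third covariant derivatives of a function, $f_{ikj} - f_{ijk} = -f_t R_{tijk}$ (indeed $f_{ijk}-f_{ikj} = R_{\ell ijk}f_\ell$ with the sign convention of the paper; I would fix the precise sign by tracing through the curvature convention $R(X,Y)Z = \nabla_X\nabla_Y Z - \cdots$). Therefore $\ricc_f$ Codazzi $\Leftrightarrow$ $R_{ik,j}-R_{ij,k} = f_t R_{tijk}$, which is exactly the condition $R_{tijk,t}=f_t R_{tijk}$ obtained above. This establishes $a)\Leftrightarrow b)$; the only delicate point is bookkeeping the sign in the Ricci commutation identity, which must be consistent with \eqref{def_D} and the soliton identities in the Lemma of Section~\ref{sec-def}.

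\medskip

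\emph{Step 2: from the Codazzi condition to the system \eqref{ICeq}.} Here I would decompose the identity $R_{ik,j}-R_{ij,k} = f_t R_{tijk}$ into its ``Cotton part'' and its ``trace part''. Using the definition \eqref{def_cot} of the Cotton tensor, $R_{ij,k}-R_{ik,j} = C_{ijk} + \frac{1}{2(n-1)}(R_k\delta_{ij}-R_j\delta_{ik})$, so the condition becomes
\[
C_{ijk} + \tfrac{1}{2(n-1)}(R_k\delta_{ij}-R_j\delta_{ik}) = -f_t R_{tijk}.
\]
Next substitute the Weyl decomposition \eqref{Weyl} for $R_{tijk}$: $f_t R_{tijk} = f_t W_{tijk} + \frac{1}{n-2}f_t(R_{tk}\delta_{ij}-R_{tj}\delta_{ik} + R_{ik}\delta_{tj}-R_{ij}\delta_{tk}) - \frac{R}{(n-1)(n-2)}f_t(\delta_{tk}\delta_{ij}-\delta_{tj}\delta_{ik})$. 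Collecting terms and comparing with the definition \eqref{def_D} of $D^{\nabla f}$, the $\delta$-terms reorganize into $D^{\nabla f}_{ijk}$ plus a residual multiple of $(f_jR_{ik}-f_kR_{ij})$-type and $\nabla R$-type terms. Taking the trace of the resulting identity in $i,j$ (using that $C$, $W$, $D^{\nabla f}$ are all totally trace-free) isolates precisely the scalar relation $R_i = 2f_t R_{ti}$; feeding this back into the full identity collapses the residual terms and yields $C_{ijk} + f_t W_{tijk} = D^{\nabla f}_{ijk}$. Conversely, the system \eqref{ICeq} plainly reassembles into the Codazzi condition by reversing these algebraic manipulations.

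\medskip

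\emph{Main obstacle.} None of the steps is conceptually hard; the real work — and the place where errors creep in — is the careful tracking of signs and index positions through the Weyl decomposition and the commutation formula, and verifying that the $\delta$-contraction terms in $f_t R_{tijk}$ reassemble \emph{exactly} into $D^{\nabla f}_{ijk}$ (modulo the scalar identity $R_i = 2f_tR_{ti}$), given that the paper's convention for $D$ differs from that of Cao--Chen. I would organize the computation so that the trace step is performed first, extracting $R_i = 2f_tR_{ti}$, and only then match the remaining tensorial identity against \eqref{def_D}; this keeps the algebra linear and transparent, and makes the equivalence $b)\Leftrightarrow c)$ essentially a matter of grouping terms.
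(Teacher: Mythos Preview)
Your proposal is correct and follows essentially the same route as the paper: the equivalence $a)\Leftrightarrow b)$ is obtained by expanding $\diver(e^{-f}\riem)$ and invoking the commutation rule $f_{ijk}-f_{ikj}=f_tR_{tijk}$ together with the second Bianchi identity, and $b)\Leftrightarrow c)$ (the paper phrases it as $a)\Leftrightarrow c)$) by inserting the Cotton and Weyl decompositions into $R_{ij,k}-R_{ik,j}=-f_tR_{tijk}$, tracing to extract $R_i=2f_tR_{ti}$, and then matching the remaining terms against \eqref{def_D}. Your emphasis on first isolating the scalar identity by tracing is exactly the right bookkeeping device, and the only care needed is indeed the sign convention, which is fixed by the paper's Ricci commutation formula.
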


\begin{proof}
  The equivalence $a) \Leftrightarrow b)$ follows from the commutation $f_{jkt}-f_{jtk}=f_{i}R_{ijkt}$ and
  \begin{align*}
  \pa{e^{-f}R_{ijkt}}_i &= e^{-f}\pa{R_{ijkt,i}-f_{i}R_{ijkt}} = e^{-f}\pa{R_{jt,k}-R_{jk,t}- f_{i}R_{ijkt}}\\ &= e^{-f}\big((\ricc_{f})_{jt,k}-(\ricc_{f})_{jk,t}\big) \,.
  \end{align*}
If $(M,g,f)\in\HCf$, we have
  \[
  R_{tijk, t} - f_tR_{tijk}=0,
  \]
  that is,
  \[
  R_{ij, k}-R_{ik, j} = -f_tR_{tijk}.
  \]
  Using in the previous relation the definition of the Cotton tensor $C$ and $D^{\nabla f}$, the decomposition of the Riemann curvature tensor \eqref{Weyl} and $R_{i} = 2 f_{t}R_{ti}$ we get the equivalence $a) \Leftrightarrow c)$.
\end{proof}
If $n\geq 4$, Lemma \ref{lem-equiv} and equation \eqref{def_Cotton_comp_Weyl} immediately imply
  \begin{align*}
      (M,g,f)\in\HCf \,\Longleftrightarrow\, \ricc_{f} \text{ is a Codazzi tensor } \,\Longleftrightarrow\, \begin{cases}
        W_{tijk,t} = \pa{\frac{n-3}{n-2}}\Big(f_{t}W_{tijk} - D^{\nabla f}_{ijk}\Big) \\ \nabla R=2\ricc(\nabla f,\cdot)
      \end{cases}
  \end{align*}
Let $E_{f}:=\ricc_{f}-\frac{R_{f}}{2} g$. In analogy with the classical case  we call it {\em $f$-Einstein tensor}. From the commutation rule $f_{ijk}-f_{ikj}=f_{t}R_{tijk}$ and from equation \eqref{eq-divkn}, we have
\begin{align*}
\diver(E_{f}\owedge g)_{ijk} =&\, \Big(f_{tjt}-\frac{1}{2}f_{ttj}\Big)\delta_{ik}-\Big(f_{tkt}-\frac{1}{2}f_{ttk}\Big)\delta_{ij} + \Big(R_{ik,j}-R_{ij,k}\Big)+f_{t}R_{tikj}\\
&-\frac{1}{2}\Big[(R_f)_{j}\delta_{ik}-(R_f)_{k}\delta_{ij}\Big]\\
=& \frac{1}{2}\big(R_{k}-2f_{t}R_{tk}\big)\delta_{ij}-\frac{1}{2}\big(R_{j}-2f_{t}R_{tj}\big)\delta_{ik}+\big(R_{ik,j}-R_{ij,k}\big)+f_{t}R_{tikj}
\end{align*}
Now, if $\diver\sq{E_{f}\owedge g}=0$, tracing the previous relation we obtain $\nabla R=2\ricc(\nabla f,\cdot)$. Hence
\begin{align*}
0=\big(R_{ik,j}-R_{ij,k}\big)+f_{t}R_{tikj} =e^{f}\diver(e^{-f}\riem)_{ijk}\,,
\end{align*}
i.e. $(M,g,f)\in\HCf$. Note that the converse is also true, and thus
 \begin{align}
      (M,g,f)\in\HCf \quad\Longleftrightarrow \quad  \diver\sq{E_{f}\owedge g}=0
  \end{align}
Moreover the latter equivalence enables us to define the non-gradient counterpart of $\HCf$, as we will see in Section \ref{sec-nongrad}.

 \subsection*{The classes $\Y$ and $\Yf$} Obviously, by Bianchi identities one has
  \begin{align*}
     (M,g)\in \Y \quad \Longleftrightarrow \quad \nabla R= 0  \quad \Longleftrightarrow \quad \diver(\ricc-R\,g)=0 \,.
        \end{align*}
As far as $\Yf$ metrics are concerned, since
$$
\big(E_{f}\owedge g\big)_{isks} = (n-2) \big(\ricc_{f}-R_{f} g \big)_{ik}\,,
$$
we have
  \begin{align*}
      (M,g,f)\in \Yf \quad \Longleftrightarrow \quad  \nabla R=2\ricc(\nabla f,\cdot)\quad \Longleftrightarrow \quad \diver\pa{\ricc_{f}-R_{f} g}=0
        \end{align*}
and, again, the latter equivalence enables us to define the non-gradient counterpart of $\Yf$ (see again Section \ref{sec-nongrad}).

\

\section{The rigid classes: $\SFf$, $\LSf$, $\LSEf$ and $\PRf$} \label{sec-rig}

\Small{

\begin{equation*}
\begin{array}{ccccccccc}
\,& \,& \LS & \subset & \PR & \, & \, & \, \\
\,& \rotatebox{45}{$\subset$}& \cup & \, & \cup &\rotatebox{45}{$\cap$} &\, & \,\\
\SF	&\subset &\LSE &\subset	&\E	&\subset	&\HC	&\subset	&\Y\\
\cap	&\,			& \cap	&\,			& \cap		&\,		&\cap  & \, &\cap\\
\mathbfcal{SF}_{f}	&\subset &\mathbfcal{LSE}_{f}	&\subset &\Ef	&\subset	&\HCf	&\subset	&\Yf\\
\,& \rotatebox{-45}{$\subset$}& \cap & \, & \cap &\rotatebox{-45}{$\cup$} &\, & \,\\
\,& \,& \mathbfcal{LS}_{f} & \subset & \mathbfcal{PR}_{f} & \, & \, & \, \\
\end{array}
\end{equation*}

}

\

\

\normalsize

First of all we observe that, as in the case of $\PR$, if $(M,g,f)\in\PRf$, i.e. $\nabla \ricc_{f}=0$ on $M$,
from the de Rham decomposition theorem, then $(M,g,f)$ is locally a Riemannian product of gradient Ricci solitons (see e.g. \cite[Sect. 16.12(i)]{besse} for a general splitting result concerning Codazzi tensor with constant eigenvalue).

\subsection*{$\SFf$: proof of Proposition \ref{pro-sff}}

Let $(M,g,f)\in\SFf$. First we observe that, in dimension $n=3$, $\SFf=\Ef$. Thus, from the classification of three dimensional gradient shrinking solitons, if $\lambda>0$, then $(M,g)$ is isometric, up to quotients, to either $\SS^{3}$ or $\RR\times\SS^{2}$ or $\RR^{3}$. On the other hand, if $n\geq 4$, from the conditions \eqref{equ-sff}, $(M,g,f)$ is a locally conformally flat gradient Ricci soliton. Proposition \ref{pro-sff} now follows from the classifications results in the shrinking (\cite{nw, zhang, pw}), steady (\cite{caoche1, catman}) and expanding (\cite{catman}) cases. To the best of our knowledge, the complete classification of locally conformally flat, gradient expanding Ricci solitons is still open; however it is known that around any regular point of $f$ the manifold $(M,g)$ is locally a warped product with codimension one fibers of constant sectional curvature.

\subsection*{$\LSf$ and $\LSEf$: proof of Proposition \ref{pro-lsf}}

Let $(M,g,f)\in\LSf$. As we have already observed, in dimension $n=3$, $\LSf=\PRf$. If $n\geq 4$, from equation \eqref{equ-lsf} we have that $(M,g,f)\in\PRf$ and the Weyl tensor is parallel, $\nabla W=0$. In particular, by a classical result of Roter (see \cite{derrot}), either $\nabla \riem =0$ or $W=0$. In the first case $(M,g,f)\in\LS$, while in the second case we are left with a locally conformally flat manifold with $\nabla \ricc_{f}=0$. Again, by de Rham decomposition theorem, we have just two possibilities:  $(M,g,f)\in\Ef$ with $W=0$ and thus, from equation \eqref{equ-sff}, $(M,g,f)\in\SFf$; $(M,g)$ splits as the product of two locally symmetric factors (a line with a space form or two space forms with opposite constant curvature and same dimension). In this latter case, $(M,g,f)\in\LS$.

Now let $(M,g,f)\in\LSEf$. In dimension $n=3$, $\LSEf=\Ef$, while if $n\geq 4$, by the previous discussion, either $(M,g,f)\in\SFf\cap\Ef=\SFf$, or $(M,g,f)\in\LS\cap\Ef$. In this case, in particular, the manifold is a gradient Ricci solitons which is also locally a product of Einstein metrics. Considering the universal cover and using classical results on concircular (gradient) vector fields (see e.g. \cite{tashiro}), we have that we can only have two type of factors in the decomposition: the Euclidean space or a (locally symmetric) Einstein manifold. This concludes the proof.

\

\section{The class $\HCf$: rigidity results, characterizations and examples} \label{sec-hcf}

\Small{

\begin{equation*}
\begin{array}{ccccccccc}
\,& \,& \LS & \subset & \PR & \, & \, & \, \\
\,& \rotatebox{45}{$\subset$}& \cup & \, & \cup &\rotatebox{45}{$\cap$} &\, & \,\\
\SF	&\subset &\LSE &\subset	&\E	&\subset	&\HC	&\subset	&\Y\\
\cap	&\,			& \cap	&\,			& \cap		&\,		&\cap  & \, &\cap\\
\SFf	&\subset &\LSEf	&\subset &\Ef	&\subset	&\mathbfcal{HC}_{f}	&\subset	&\Yf\\
\,& \rotatebox{-45}{$\subset$}& \cap & \, & \cap &\rotatebox{-45}{$\cup$} &\, & \,\\
\,& \,& \LSf & \subset & \PRf & \, & \, & \, \\
\end{array}
\end{equation*}

}

\

\

\normalsize

First of all, we recall that $(M, g, f)\in \HCf$ if and only if $\diver \big(e^{-f}\riem\big) =0$ or, equivalently, from Lemma \ref{lem-equiv}, if and only if \begin{equation*}
    \begin{cases}
      C_{ijk}+f_tW_{tijk} = D^{\nabla f}_{ijk} \\   R_{i} = 2 f_{t}R_{ti},
    \end{cases}
      \end{equation*}
      where
 \begin{align*}
   D^{\nabla f}_{ijk}=&\frac{1}{n-2}\pa{f_kR_{ij}-f_jR_{ik}}+\frac{1}{(n-1)(n-2)}f_t\pa{R_{tk}g_{ij}-R_{tj}g_{ik}}\\\nonumber
 &\,-\frac{R}{(n-1)(n-2)}\pa{f_k g_{ij}-f_j g_{ik}}.
 \end{align*}

\subsection*{Proof of Theorem \ref{teo-lcf}.} Let $(M,g,f)\in\HCf$; by the assumption of local conformal flatness, both the Cotton and the Weyl tensor vanish on $M$. From Lemma \ref{lem-equiv} we get that the tensor $D^{\nabla f}$ vanishes. Contracting with $\nabla f$ and using equation $R_{i} = 2 f_{t}R_{ti}$,  we obtain
\begin{align*}
0=&\,(n-1)(n-2)D_{ijk}f_{k}=(n-1)|\nabla f|^{2}R_{ij}-(n-1)R_{ik}f_{k}f_{j}\\\nonumber
&\,+R_{tk}f_{t}f_{k}g_{ij}-R_{tj}f_{t}f_{i}
 -|\nabla f|^{2}R\,g_{ij}+Rf_{i}f_j\\
 =&\, (n-1)|\nabla f|^{2}R_{ij}-|\nabla f|^{2}R\,g_{ij}-\frac{n-1}{2}R_{i}f_{j}-\frac{1}{2}f_{i}R_{j}+\frac{1}{2}\langle \nabla R,\nabla f\rangle g_{ij} +R f_{i}f_{j} \,.
\end{align*}
By symmetry, we get $R_{i}f_{j}=R_{j}f_{i}$, i.e. $dR \wedge df =0$. In particular, $\nabla f$ is an eigenvector of the Ricci tensor and, from $2\ricc(\nabla f, \nabla f)=\langle \nabla R,\nabla f\rangle$ we obtain
$$
0 = (n-1)|\nabla f|^{2}R_{ij}-|\nabla f|^{2}R\,g_{ij}-\frac{n}{2}R_{i}f_{j}+\ricc(\nabla f, \nabla f) g_{ij} +R f_{i}f_{j}\,.
$$
Now, around a regular point of $f$, pick any orthonormal frame $e_{1},\ldots,e_{n}$ which diagonalize the Ricci tensor. Since $\nabla f$ is an eigenvector of Ricci, without loss of generality we can set $e_{1} =\frac{\nabla f}{|\nabla f|}$. Denote by $\mu_{k}$, $k=1,\ldots,n$ the corresponding eigenvalues. Then, for every $k\geq 2$, we have
$$
0 =  |\nabla f|^{2}\big((n-1)\mu_{k}-R+\mu_{1}\big)\,.
$$
Thus, around a regular point of $f$, one has $\mu_{k}=\frac{1}{n-1}(R-\mu_{1})$ for every $k\geq 2$. In particular, around a regular point of $f$, either the Ricci is proportional to the metric or it has an eigenvalue of multiplicity $(n-1)$ and another of multiplicity $1$.

Now suppose that $f$ is not constant. We have shown that either the metric is locally Einstein (thus of constant curvature), or the Ricci tensor has two eigenvalues of multiplicity 1 and $(n-1)$. In the first case, the manifold must be locally isometric to a space form. In the second case, since the Cotton tensor $C$ vanishes, the Schouten tensor $\ricc-\frac{1}{2(n-1)}R\,g$ is a Codazzi tensor with at most two distinct eigenvalues of multiplicity 1 and $(n-1)$. Hence, by general results on Codazzi tensors with this property (see \cite{mer, besse, catmanmazcod}) we get that the manifold $(M,g)$ is locally a warped product with codimension one fibers. Since the manifold is locally conformally flat, the fibers must have constant sectional curvature.

This concludes the proof of Theorem \ref{teo-lcf}.

\subsection*{Proof of Theorem \ref{teo-4d} and Corollary \ref{cor-hcf}.} First of all we recall the decomposition of the bundle two forms $\Lambda^{2}$ in dimension four
\begin{equation}\label{dec}
\Lambda^{2}=\Lambda^{+} \oplus \Lambda^{-}\,.
\end{equation}
These subbundles are by definition the eigenspaces of the Hodge operator
$$
\star:\Lambda^{2}\rightarrow \Lambda^{2}
$$
corresponding respectively to the eigenvalue $\pm 1$. In the literature, sections of $\Lambda^{+}$ are called {\em self-dual} two-forms, whereas sections of $\Lambda^{-}$ are called {\em anti-self-dual} two-forms. Now, since the curvature tensor $\riem$ may be viewed as a map $\mathcal{R}:\Lambda^{2}\ra\Lambda^{2}$, according to \eqref{dec} we have the curvature decomposition
\begin{displaymath}
\mathcal{R}=\left(\begin{array}{c|c}
W^{+}+\frac{R}{12}\,I & \overset{\circ}{\ricc} \\
\hline
\overset{\circ}{\ricc} & W^{-}+\frac{R}{12}\,I \end{array}\right), \end{displaymath}
where
$$
W = W^{+} + W^{-}
$$
and the self-dual and anti-self-dual $W^{\pm}$ are trace-free endomorphisms of $\Lambda^{\pm}$, $I$ is the identity map of $\Lambda^{2}$ and $\overset{\circ}{\ricc}$ represents the trace-free Ricci curvature $\ricc-\frac{R}{4}g$. Recall the Hirzebruch signature formula (see e.g. \cite{besse})
$$
48\pi^{2} \tau(M)= \int_{M} |W^{+}|^{2} - \int_{M}|W^{-}|^{2} \,.
$$
Assume that $\tau(M)\neq 0$ and let $(M,g,f)\in\HCf$, for some potential function $f$; assume also that, in harmonic coordinates, $g$ and $f$ are real analytic. From Lemma \ref{lem-equiv}, the Bakry-Emery Ricci tensor $\ricc_{f}$ is Codazzi. In particular the following property holds:
\begin{lemma}\label{lem-bou} Let $T$ be a Codazzi tensor on a four dimensional Riemannian manifold $(M,g)$. Then, at any point $x$ where $T$ is not a multiple of $g$, the endomorphisms $W^{+}$ of $\Lambda^{+}$ and $W^{-}$ of $\Lambda^{-}$ have equal spectra at $x$.
\end{lemma}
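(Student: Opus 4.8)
The plan is to argue pointwise at $x$ and reduce everything to linear algebra on $T_xM$. Since $T$ is symmetric, diagonalize it at $x$: choose an oriented orthonormal basis $e_1,\dots,e_4$ of $T_xM$ with $Te_a=t_ae_a$, where by hypothesis not all the $t_a$ coincide. We want to show that, in the associated orthonormal bases of $\Lambda^+$ and $\Lambda^-$ (spanned by the normalized combinations $e^i\wedge e^j\pm e^k\wedge e^l$, $\{i,j,k,l\}=\{1,2,3,4\}$), the symmetric trace-free $3\times 3$ matrices representing $W^+$ and $W^-$ have the same characteristic polynomial, i.e. $\operatorname{tr}W^\pm=0$ (automatic), $|W^+|^2=|W^-|^2$ and $\det W^+=\det W^-$ at $x$.

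The key input is a family of algebraic identities for the curvature at $x$ produced by the Codazzi condition. Because $T_{ij,k}=T_{ik,j}$ and $T$ is symmetric, the $3$-tensor $T_{ij,k}$ is in fact totally symmetric; differentiating once more and commuting the two outermost covariant derivatives via the Ricci identity $T_{ij,kl}-T_{ij,lk}=T_{mj}R_{mikl}+T_{im}R_{mjkl}$ (equivalently: $T$, viewed as the $TM$-valued $1$-form $\alpha$, $\alpha(X)=TX$, is closed, $d^\nabla\alpha=0$ being exactly the Codazzi equation, so $d^\nabla d^\nabla\alpha=R^\nabla\cdot\alpha=0$) yields, after evaluating on the eigenbasis, relations of the form $(t_a-t_b)R_{abcd}=(t_a-t_c)R_{acbd}+(t_a-t_d)R_{adcb}$, together with their contractions $(t_j-t_k)R_{ijik}=0$. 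Combining these with the first Bianchi identity gives two consequences at $x$: (i) $R_{ab}=0$, and more generally every component $R_{ijik}=0$, whenever $t_a\ne t_b$ (so the Ricci tensor is also diagonalized by $\{e_a\}$ along the non-degenerate directions); and (ii) $R_{abcd}=0$ whenever $t_a=t_b\ne t_c$, while for $a,b,c,d$ pairwise distinct any nonvanishing $R_{abcd}$ would force an exceptional linear relation among the eigenvalues (of the type $t_a+t_c=t_b+t_d$); since at most one such relation can survive once the eigenvalues are genuinely different, the Bianchi identity then forces all the ``totally mixed'' components $R_{abcd}=W_{abcd}$ (the Kulkarni--Nomizu corrections dropping out for distinct indices) to vanish at $x$ in every relevant case.

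With these vanishings in hand I would conclude by a short computation with the explicit matrices of $W^\pm$. Using that $W$ is totally trace-free (which by itself forces $W_{1212}=W_{3434}$, $W_{1313}=W_{2424}$, $W_{1414}=W_{2323}$) together with (i)--(ii), the matrices of $W^+$ and $W^-$ in the standard bases become diagonal with identical entries when $T$ has four distinct eigenvalues, and in the remaining multiplicity patterns $(3,1)$, $(2,2)$, $(2,1,1)$ one checks directly that the residual components still satisfy $|W^+|^2=|W^-|^2$ and $\det W^+=\det W^-$; hence $\operatorname{spec}W^+=\operatorname{spec}W^-$ at $x$. This is the lemma of Bourguignon; see also the discussion of Codazzi tensors in \cite{besse}.

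The main obstacle is the bookkeeping of this case analysis. In the degenerate multiplicity patterns the Codazzi identities no longer make the Ricci tensor diagonal in the chosen frame, so several off-diagonal Weyl components retain Ricci terms and one must invoke the finer three-index relations $(t_j-t_k)R_{ijik}=0$ (and their consequences on the remaining $R_{ab}$) to control them; one also has to fix orientation conventions so that the identifications of $\Lambda^\pm$ and the sign combinations $e^i\wedge e^j\pm e^k\wedge e^l$ are compatible throughout. Everything else is elementary linear algebra on a four-dimensional vector space, once the curvature identities coming from the Codazzi condition are in place.
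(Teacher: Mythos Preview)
The paper does not give its own proof of this lemma: immediately after the statement it writes ``This result was proved by Bourguignon \cite{bou2tr} (see also \cite{dershe})'' and simply uses it as a black box. So there is nothing to compare against at the level of the paper itself.

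That said, your sketch is essentially the classical argument you are citing. The key pointwise identity you write down,
\[
(t_i-t_j)R_{ijkl}=(t_i-t_k)R_{ikjl}+(t_i-t_l)R_{ilkj},
\]
does follow from differentiating the Codazzi equation and commuting derivatives as you describe, and setting $k=i$ in it yields exactly your relation $(t_l-t_j)R_{ijil}=0$, so the Ricci tensor is diagonal in any eigenframe of $T$ on the eigenspaces with distinct eigenvalues. From there the case analysis on the multiplicity pattern of $T$ and the explicit $3\times3$ matrices of $W^{\pm}$ in the standard bases of $\Lambda^{\pm}$ is precisely how Bourguignon (and, in a sharper form, Derdzinski--Shen) proceed. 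Your honest caveat about the bookkeeping in the degenerate patterns $(3,1)$, $(2,2)$, $(2,1,1)$ is well placed: that is where the argument has content, and where one must actually use the three–index relations rather than just the vanishing of the totally mixed $R_{abcd}$. But the strategy is correct and matches the references the paper invokes.
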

This result was proved by Bourguignon \cite{bou2tr} (see also \cite{dershe}) and used in the context of manifolds with harmonic curvature. By analyticity, it implies that either $\ricc_{f}$ is proportional to the metric  (i.e. $(M,g,f)\in\Ef$), or $W^{+}$ and $W^{-}$ have equal spectra on $M$. But this contradicts the topological assumption on $\tau(M)$ and the first part of Theorem \ref{teo-4d} is proved.

Assume now that $(M,g,f)\in \HCfl$, without imposing extra regularity on $g$ ad $f$. We have that
\begin{equation}\label{WHCl}
\diver \big(e^{-f}\riem\big) =0\, \quad\hbox{and}\,\quad R+\Delta f = n \lambda \,.
\end{equation}
From Lemma \ref{lem-equiv}, the Bakry-Emery Ricci tensor $\ricc_{f}$ is a Codazzi tensor with constant trace. Equivalently,
$$
\overset{\circ}{\ricc_{f}}:=\ricc_{f} - \frac{R_{f}}{n}g
$$
is a trace-free Codazzi tensor. In particular, we have the following regularity lemma which follows from a general results of Kazdan \cite{kazdan} (see also \cite{gursky, catconf} for some applications).
\begin{lemma}\label{l-kaz}
Let $\overset{\circ}{T}$ be a, non-trivial, trace-free Codazzi tensor on a Riemannian manifold $(M,g)$ and let $\Omega_{0}=\{\,x\in M^{n}:\, |\overset{\circ}{T}|(x)\neq 0\,\}$. Then $\hbox{Vol}\,(M\setminus\Omega_{0}) = 0$.
\end{lemma}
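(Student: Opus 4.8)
The plan is to rule out that the zero set $M\setminus\Omega_{0}=\{x\in M:\overset{\circ}{T}(x)=0\}$ has positive measure by combining the elliptic system satisfied by a trace-free Codazzi tensor with a unique continuation theorem and the Lebesgue density theorem. We may assume $M$ connected; since $\overset{\circ}{T}$ is non-trivial it suffices to show that $\mathrm{Vol}(M\setminus\Omega_{0})>0$ leads to a contradiction.

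First I would record the differential identities. Contracting the Codazzi equation $\overset{\circ}{T}_{ij,k}=\overset{\circ}{T}_{ik,j}$ on $i=k$ and using $\mathrm{tr}\,\overset{\circ}{T}=0$ gives $\diver(\overset{\circ}{T})=0$. Then, using Codazzi once more, $\Delta\overset{\circ}{T}_{ij}=\overset{\circ}{T}_{ij,kk}=\overset{\circ}{T}_{ik,jk}$, and commuting the derivative indices via the Ricci identity produces a term $\overset{\circ}{T}_{ik,kj}=\nabla_{j}(\diver\overset{\circ}{T})_{i}=0$ together with a contraction of the curvature tensor with $\overset{\circ}{T}$; crucially, no first-order term in $\overset{\circ}{T}$ survives. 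Hence on any relatively compact coordinate chart there is a constant $C$ with the pointwise bound $|\Delta\overset{\circ}{T}|\le C|\overset{\circ}{T}|$.

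This differential inequality is exactly the input for the strong unique continuation property of Aronszajn, in the tensorial form used by Kazdan \cite{kazdan}: if $\overset{\circ}{T}$ vanishes to infinite order at some point $x_{0}\in M$ (all covariant derivatives of $\overset{\circ}{T}$ vanish at $x_{0}$, equivalently $|\overset{\circ}{T}(x)|=O(\dist(x,x_{0})^{N})$ for all $N$), then $\overset{\circ}{T}$ vanishes on a neighborhood of $x_{0}$. The set $Z$ of points of infinite-order vanishing is closed (it is an intersection of the closed sets $\{\nabla^{k}\overset{\circ}{T}=0\}$) and, by the previous sentence, open; connectedness of $M$ would then force $Z=M$, so $\overset{\circ}{T}\equiv 0$, contradicting non-triviality. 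Thus it is enough to exhibit a single point at which $\overset{\circ}{T}$ vanishes to infinite order.

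Such a point is produced from the Lebesgue density theorem. If $\mathrm{Vol}(M\setminus\Omega_{0})>0$, the zero set $\{\overset{\circ}{T}=0\}$ has a point $x_{0}$ of density one. Working in normal coordinates centered at $x_{0}$, suppose some covariant derivative of $\overset{\circ}{T}$ at $x_{0}$ does not vanish; let $P_{k}(h)$ be the lowest-order, necessarily nonzero, homogeneous term of degree $k$ in the Taylor expansion of the components of $\overset{\circ}{T}$ at $x_{0}$, so that $|\overset{\circ}{T}(x_{0}+h)|=|P_{k}(h)|+o(|h|^{k})$. Since $P_{k}$ is a nonzero homogeneous polynomial, its zero set is a cone of measure zero, and for $\varepsilon$ small enough the cone $\{h:|P_{k}(h)|\ge\varepsilon|h|^{k}\}$ has positive lower density at the origin; on it $\overset{\circ}{T}(x_{0}+h)\ne 0$ for $|h|$ small, so $M\setminus\Omega_{0}$ fails to have density one at $x_{0}$ --- a contradiction. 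Therefore $\overset{\circ}{T}$ vanishes to infinite order at $x_{0}$, and the previous step gives $\overset{\circ}{T}\equiv 0$, the desired contradiction, proving $\mathrm{Vol}(M\setminus\Omega_{0})=0$.

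The only genuinely nontrivial ingredient is the Aronszajn--Kazdan unique continuation theorem; the derivation of $|\Delta\overset{\circ}{T}|\le C|\overset{\circ}{T}|$ from the Codazzi and trace-free conditions, and the Taylor-polynomial/density argument, are elementary, so the main care needed is in verifying that the regularity available for $\overset{\circ}{T}$ (here $\overset{\circ}{\ricc_{f}}$) is sufficient to invoke \cite{kazdan}; for smooth $g$ and $f$ this is immediate.
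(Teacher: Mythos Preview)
Your argument is correct and is precisely the standard unpacking of the citation the paper gives. The paper itself does not supply a proof of this lemma at all: it simply states that the result ``follows from a general result of Kazdan \cite{kazdan}'' and refers to \cite{gursky, catconf} for related applications. Your write-up fills in exactly the details behind that citation: the Weitzenb\"ock-type computation showing that a trace-free Codazzi tensor satisfies $|\Delta\overset{\circ}{T}|\le C|\overset{\circ}{T}|$ (since $\diver\overset{\circ}{T}=0$ kills the first-order term after commuting derivatives), the Aronszajn--Kazdan strong unique continuation, and the Lebesgue-density/Taylor argument to produce a point of infinite-order vanishing from a zero set of positive measure. So there is nothing to compare approach-wise --- you have written out what the paper leaves as a black-box reference; the only minor stylistic point is that you might state explicitly that working in normal coordinates identifies vanishing of covariant derivatives with vanishing of partial derivatives (by induction on order), which you are tacitly using in the Taylor step.
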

Using this, together with Lemma \ref{lem-bou}, one has that either $\overset{\circ}{\ricc_{f}}\equiv 0$ (i.e. $(M,g,f)\in\Ef$), or
$$
\int_{M}|W^{+}|^{2} = \int_{M}|W^{-}|^{2} \,,
$$
which again contradicts the assumption $\tau(M)\neq 0$, and the second part of Theorem \ref{teo-4d} is proved.

Finally, Corollary \ref{cor-hcf} follows immediately from Theorem \ref{teo-4d} ii) and the classification of half conformally flat gradient Ricci solitons in \cite{chenwang}.

\subsection*{Proof of Proposition \ref{pro-sec} and Corollary \ref{cor-tac}.}

Let $(M,g,f)\in \HCfl$. Then $\overset{\circ}{\ricc_{f}}$ is a trace-free Codazzi tensor. In particular (see \cite{besse} or \cite{catconf}), the following Weitzenb\"ock formula holds
\begin{equation}\label{eq-wei}
\frac{1}{2}\Delta |\overset{\circ}{\ricc_{f}}|^{2} \,=\, |\nabla \overset{\circ}{\ricc_{f}}|^{2} - R_{ikjl} (\overset{\circ}{\ricc_{f}})_{ij}(\overset{\circ}{\ricc_{f}})_{kl} + R_{jk} (\overset{\circ}{\ricc_{f}})_{ij}(\overset{\circ}{\ricc_{f}})_{ik} \,.
\end{equation}
Let $\{e_{i}\}$, $i=1,\ldots, n$, be the set of the eigenvectors of $\overset{\circ}{\ricc_{f}}$ and let $\mu_{i}$ be the corresponding eigenvalues. Moreover, let $k_{ij}$ be the sectional curvature defined by the two-plane spanned by $e_{i}$ and $e_{j}$. One has
$$
-R_{ikjl} (\overset{\circ}{\ricc_{f}})_{ij}(\overset{\circ}{\ricc_{f}})_{kl} + R_{jk} (\overset{\circ}{\ricc_{f}})_{ij}(\overset{\circ}{\ricc_{f}})_{ik} = - \sum_{i,j=1}^{n} \mu_{i}\mu_{j} k_{ij} + \sum_{i,j=1}^{n} \mu_{i}^{2} k_{ij} = \sum_{i<j} (\mu_{i}-\mu_{j})^{2}k_{ij} \geq 0\,,
$$
since $k_{ij} > 0$ for all $i,j=1,\ldots,n$. Using this and integrating the Weitzenb\"ock formula, we get that $\overset{\circ}{\ricc_{f}}$ has to be zero on $M$, i.e. $(M,g,f)\in\Ef$. This proves Proposition \ref{pro-sec}.

Corollary \ref{cor-tac} simply follows from Proposition \ref{pro-sec} and the classification of compact gradient Ricci solitons with positive curvature operator (see \cite{bohwil}).

\subsection*{Two examples}

We construct two examples of Riemannian manifolds in $\HCf$, following the construction for the harmonic curvature case given by Derdzinski in \cite{derd3}, following the same notation to highlight the similarities. Let $I\subseteq \RR$ be an interval, $F\in C^{\infty}(I)$ a smooth positive function on $I$ and $(N,h)$ an $(n-1)$-dimensional Einstein manifold with constant scalar curvature $k$. We consider the warped product manifold $\Big(M=I\times N, g=dt^{2}+F(t)h\Big)$. Letting the indices $i,j,k$ run through $1,\ldots,n-1$ and given a local chart $t=x^{0},x^{1},\ldots,x^{n-1}$ for $I\times N$, we have $g_{00}=1, g_{0i}=0$, $g_{ij}=F\,h_{ij}$ and the components of the Ricci tensor $\ricc$ and its covariant derivative $\nabla\ricc$ are given by
\begin{align}\label{ricex}
R_{00}&=-\frac{n-1}{4}\Big(2q''+(q')^{2}\Big)\,,\quad R_{0i}=0 \,,\\\nonumber
R_{ij}&= \left(\frac{k}{n-1}-\frac{1}{4}e^{q}\Big(2q''+(n-1)(q')^{2}\Big)\right) h_{ij} \,,\\\nonumber
\nabla_{0}R_{00} &= -\frac{n-1}{2}\Big(q'''+q'q''\Big)\,, \quad \nabla_{0}R_{i0}=\nabla_{i}R_{00}=0 \,,\\\nonumber
\nabla_{0}R_{ij}&= -\left(\frac{k}{n-1}q'+\frac{1}{2}e^{q}q'''+\frac{n-1}{2}e^{q}q'q''\right)h_{ij}\,,\\
\nabla_{i}R_{0j} &= -\left(\frac{k}{2(n-1)}+\frac{n-2}{4}e^{q}q'q''\right)h_{ij}\,,\quad \nabla_{k}R_{ij}=0 \,,
\end{align}
where $q=\log F$. Since $\nabla_{0}R_{i0}=\nabla_{i}R_{00}=R_{pi00}f_{p}=0$, the condition $\diver(e^{-f}\riem)=0$ is equivalent to
\begin{equation}\label{eq127}
\nabla_{0}R_{ij}-\nabla_{i}R_{0j}+R_{0ji0}\nabla_{0}f=0 \,.
\end{equation}
Using the expression of the Riemann curvature tensor in terms of the Christoffel symbols and the fact  that $\Gamma^{j}_{0i}=\frac{1}{2}q'h_{ij}$, $\Gamma^{i}_{00}=\Gamma^{0}_{i0}=\Gamma^{0}_{00}=0$, one has
\begin{align*}
R_{0ij0} &= \partial_{0}\Gamma^{j}_{i0}-\partial_{i}\Gamma^{j}_{00}+\Gamma^{p}_{i0}\Gamma^{j}_{0p}-\Gamma^{p}_{00}\Gamma^{j}_{ip} \\
&=\partial_{0}\Big(\frac{1}{2}q'h_{ij}\Big)+\Gamma^{k}_{i0}\Gamma^{j}_{0k}\\
&=\frac{1}{4}\Big(2q''+(q')^{2}\Big)h_{ij} \,.
\end{align*}
Hence, equation \eqref{eq127} is equivalent to the following differential equation for the function $q$
\begin{equation}\label{eq145}
q'''+\frac{n}{2}q'q''+\frac{k}{n-1}e^{-q}q'=\frac{1}{2}\Big(2q''+(q')^{2}\Big)f' \,.
\end{equation}
First of all, a simple computations shows that the choice  $k=0$,
$$
q(t)=t^{2}\,\quad f(t)=\frac{n}{2}\log(1+t^{2})
$$
gives a solution to the equation. Hence we have that, given any $(n-1)$-dimensional Riemannian Ricci flat manifold $(N,h)$, one has
$$
\Big(M=\RR\times N,\,g=dt^{2}+e^{t^{2}}h,\,f(t)=\frac{n}{2}\log(1+t^{2})\Big) \in \HCf \,.
$$
Now we want to construct a compact example. Integrating equation \eqref{eq145}, we get
\begin{equation}\label{eq128}
q''+\frac{n}{4}(q')^{2}-\frac{k}{n-1}e^{-q}= \frac{1}{2}\int\Big(2q''+(q')^{2}\Big)f' \,.
\end{equation}
Now, we suppose that, given a function $q$ defined on some interval $I$, we can find $f$ solving
\begin{equation}\label{eq129}
\frac{1}{2}\Big(2q''+(q')^{2}\Big)f' = \frac{\varepsilon k}{n-1}q'e^{-q} \,,
\end{equation}
for some $\varepsilon>0$. Plugging this into \eqref{eq128}, we reduce problem in solving
\begin{equation}\label{eq130}
q''+\frac{n}{4}(q')^{2}-\frac{k-\varepsilon}{n-1}e^{-q}=\frac{4}{n}C\,,
\end{equation}
for some constant $C\in\RR$. Letting $\varphi:=e^{\frac{n}{4}q}$, we obtain the ODE
\begin{equation}\label{eq131}
\varphi''-\frac{n(k-\varepsilon)}{4(n-1)}\varphi^{1-\frac{4}{n}}=C\varphi \,,
\end{equation}
for some constant $C\in\RR$. It was shown in \cite[Theorem 1]{derd3} that, if $k>\varepsilon$ and $C<0$, this equation have non-constant positive periodic smooth solutions, defined in $\RR$. Now, let $\varphi=e^{\frac{n}{4}q}$ be a solution, then from the equation \eqref{eq130}, one has
$$
2q''+(q')^{2}=\frac{8}{n}C-\frac{n-2}{2}(q')^{2}+\frac{2(k-\varepsilon)}{n-1}e^{-q}\leq \frac{8}{n}\left(C+\frac{2(k-\varepsilon)}{n-1}\right)<0 \,
$$
provided $C<-\frac{2(k-\varepsilon)}{n-1}$. Then, under this assumption, we can always integrate equation \eqref{eq129} and find the potential function $f$.

Now, let $(N,h)$ be a compact $(n-1)$-dimensional Einstein manifold with (constant) positive scalar curvature $k>\varepsilon>0$; choose a non-constant, positive, periodic function $F$ on $\RR$ such that $\varphi=F^{\frac{n}{4}}$ satisfies \eqref{eq131} for some constant $C<-\frac{2(k-\varepsilon)}{n-1}$; and choose $f=f(t)$ solving equation \eqref{eq129}. Then, following the precise construction in \cite[Section 3]{derd3}, we can define a compact Riemannian quotient of $\Big(\RR\times N, g=dt^{2}+F(t)h, f(t)\Big)$, $(\widetilde{M}, \widetilde{g}, \widetilde{f})$, such that $\widetilde{M}$ is diffeomorphic to $\SS^{1}\times N$ and $\widetilde{g}$ has weighted harmonic curvature, namely $(\widetilde{M}, \widetilde{g}, \widetilde{f})\in\HCf$.

\

\section{The class $\Yf$: a possible generalization of the Yamabe problem, obstructions and examples}\label{sec-yf}

\Small{

\begin{equation*}
\begin{array}{ccccccccc}
\,& \,& \LS & \subset & \PR & \, & \, & \, \\
\,& \rotatebox{45}{$\subset$}& \cup & \, & \cup &\rotatebox{45}{$\cap$} &\, & \,\\
\SF	&\subset &\LSE &\subset	&\E	&\subset	&\HC	&\subset	&\Y\\
\cap	&\,			& \cap	&\,			& \cap		&\,		&\cap  & \, &\cap\\
\SFf	&\subset &\LSEf	&\subset &\Ef	&\subset	&\HCf	&\subset	&\mathbfcal{Y}_{f}\\
\,& \rotatebox{-45}{$\subset$}& \cap & \, & \cap &\rotatebox{-45}{$\cup$} &\, & \,\\
\,& \,& \LSf & \subset & \PRf & \, & \, & \, \\
\end{array}
\end{equation*}

}

\

\

\normalsize

In this section we consider the class of Riemannian manifolds $(M,g,f) \in \Yf$, i.e. satisfying the condition
\begin{equation}\label{yf}
\nabla R = 2\ricc(\nabla f, \cdot)\,.
\end{equation}
This equation is a meaningful generalization of the one for Yamabe metrics ($\Y$) and can be seen as a very special prescription on the gradient of the scalar curvature, connecting the Ricci tensor with its trace {\em via} the potential function.

From this point of view, it is natural to study the following problems on a given manifold $M$:

\begin{itemize}

\item[(A)] having fixed $f\in C^{\infty}(M)$, there exists a metric $g$ such that $(M,g,f)\in \Yf$?

\item[(B)] having fixed $f\in C^{\infty}(M)$ and a metric $g_{0}$, there exists a conformal metric $g\in[g_{0}]$ such that $(M,g,f)\in \Yf$?

\end{itemize}

More generally, one could ask the question

\begin{itemize}

\item[(C)] there exist a metric $g$ and a smooth function $f\in C^{\infty}(M)$ such that $(M,g,f)\in \Yf$?

\end{itemize}
Clearly the answer to (C) is positive, since it is always possible to construct a (complete) metric with constant (negative) scalar curvature (\cite{aubin} and \cite{blakal}). Furthermore, when $f$ is constant, (B) boils down to the well known Yamabe problem, which is completely solved when $M$ is compact (see e.g.  \cite{leepar}).

In the same spirit of the work of Kazdan and Warner (see \cite{kazwar1}), here we prove some obstructions to problem (B), reserving to subsequent works a thorough study of problems (A) and (B) in the case $f$  nonconstant.

First of all we recall that a smooth vector field $X$  is a {\em conformal vector field} on $(M,g)$ if and only if
\begin{equation}\label{eq-cvf}
\mathcal{L}_{X} g = \frac{2\diver{(X)}}{n} g \,,
\end{equation}
where $\mathcal{L}_{X}g$ denotes the Lie derivative of the metric in the direction $X$. Equation \eqref{yf}, together with the the well known Kazdan-Warner identity (see \cite{kazwar1, bouezi}), gives the following integral condition for compact $f$-Yamabe metrics. For the sake of completeness, we include a simple proof.
\begin{lemma}\label{lem-kaz} If $M$ is compact and $(M,g,f)\in\Yf$, then, for every conformal vector field $X$ on $(M,g)$, one has
$$
\int_{M} \ricc(\nabla f, X)\, dV = 0 \,.
$$
\end{lemma}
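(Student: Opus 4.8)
The strategy is to start from the defining equation \eqref{yf}, namely $\nabla R = 2\ricc(\nabla f,\cdot)$, pair both sides with a conformal vector field $X$, and integrate over the compact manifold $M$. This gives
$$
\int_M \pair{\nabla R, X}\, dV = 2\int_M \ricc(\nabla f, X)\, dV \,,
$$
so the claim reduces to showing that $\int_M \pair{\nabla R, X}\, dV = 0$ whenever $X$ is conformal. This is precisely the content of the classical Kazdan--Warner identity: for a conformal vector field $X$ on a compact Riemannian manifold one has $\int_M \pair{\nabla R, X}\, dV = 0$. So in principle one could simply invoke \cite{kazwar1} (or \cite{bouezi}) and be done; but since the statement promises ``a simple proof,'' I would reprove this vanishing from scratch.

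For the self-contained argument I would integrate by parts: $\int_M \pair{\nabla R, X}\,dV = -\int_M R\,\diver(X)\, dV$. Using the conformal condition \eqref{eq-cvf}, $\mathcal L_X g = \frac{2\diver X}{n} g$, one rewrites $\diver X$ in terms of $\mathcal L_X g$ and then integrates by parts again, transferring a derivative from $R$ (equivalently, from the metric data) onto the scalar-valued contractions of $\nabla X$. The cleanest route uses the general fact that, for \emph{any} vector field $X$, integrating the contracted second Bianchi identity against $X$ yields an identity relating $\int_M R\,\diver X$ to $\int_M \ricc(X, \cdot)$-type terms plus divergence terms; when $X$ is conformal the traceless part of $\mathcal L_X g$ vanishes and the Ricci terms cancel against the scalar-curvature terms, leaving $0$. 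Concretely, one can start from $\diver(\ricc) = \tfrac12 \nabla R$, pair with $X$, integrate, and use $\int_M \ricc_{ij} X_{i,j}\,dV = \tfrac12 \int_M \ricc_{ij}(\mathcal L_X g)_{ij}\, dV = \tfrac{1}{n}\int_M R\,\diver X\, dV$ by \eqref{eq-cvf}; combined with the integrated Bianchi identity this forces $\int_M R\,\diver X\, dV = \int_M R\,\diver X\, dV \cdot \tfrac{?}{?}$ in a way that pins the integral to zero (in dimension $n\ge 3$). I would write this chain of two integrations by parts carefully, since the bookkeeping of the constants is where a sign or factor of $n$ can go wrong.

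The only genuine subtlety — and the step I would treat most carefully — is making sure the manipulation is valid for \emph{every} conformal vector field, not just gradient or Killing ones, and that no boundary terms are dropped (here there are none since $M$ is closed, but the contractions must be genuine divergences of globally defined vector fields). Once the Kazdan--Warner vanishing $\int_M \pair{\nabla R,X}\,dV = 0$ is in hand, substituting back into the paired form of \eqref{yf} immediately yields $\int_M \ricc(\nabla f, X)\, dV = 0$, which is the assertion of the lemma. I do not expect any real obstacle beyond the constant-tracking in the two integrations by parts.
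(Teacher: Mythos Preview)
Your proposal is correct and follows essentially the same approach as the paper: pair the equation $\nabla R = 2\ricc(\nabla f,\cdot)$ with $X$, integrate, and reduce to the Kazdan--Warner identity $\int_M \langle \nabla R, X\rangle\,dV = 0$, which you then reprove via the contracted Bianchi identity and the conformal condition. The paper's execution is marginally slicker---it writes $\nabla_i R = \tfrac{2n}{n-2}\,\rdc_{ij,j}$ using the \emph{traceless} Ricci tensor, so that after one integration by parts the contraction with $(\mathcal{L}_X g)_{ij} = \tfrac{2\diver X}{n} g_{ij}$ vanishes immediately by tracelessness---but your route (comparing $-\int R\,\diver X$ with $-\tfrac{2}{n}\int R\,\diver X$) is the same argument up to this bookkeeping shortcut.
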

\begin{proof} From equation \eqref{yf} and the fact that $X$ satisfies
$$
X_{ij}+X_{ji} = \frac{2\diver{(X)}}{n} g_{ij} \,,
$$
one has
\begin{align*}
2\int_{M} \ricc(\nabla f, X)\, dV &= \int_{M} \langle \nabla R, X \rangle\,dV = \frac{2n}{n-2} \int_{M} \rdc_{ij,j} X_{i}  \,dV \\
&= -\frac{n}{n-2} \int_{M} \rdc_{ij} \Big(X_{ij}+X_{ji}\Big)\,dV = 0 \,,
\end{align*}
where we have used integration by parts and Bianchi identity for the trace-less Ricci tensor $\rd$, i.e. in coordinates $\rdc_{ij}=R_{ij}-\frac{R}{n}\delta_{ij}$.
\end{proof}

When $(M,g_{0})$ supports a nontrivial (nonvanishing) conformal gradient vector field, the previous lemma gives an obstruction to the existence of a $f$-Yamabe metric in the conformal class $[g_{0}]$.

\begin{cor} \label{cor-kazsphere} Let $(M,g_{0})$ be a compact Riemannian manifold and $X=\nabla f$, $f\in C^{\infty}(M)$, be a nontrivial conformal gradient vector field on $(M,g_{0})$. Then, there are no conformal metrics $g\in[g_{0}]$ such that $(M,g,f)\in\Yf$.
\end{cor}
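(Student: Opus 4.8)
The plan is to argue by contradiction. Suppose there is a metric $g=e^{2u}g_{0}\in[g_{0}]$, $u\in C^{\infty}(M)$, with $(M,g,f)\in\Yf$; the goal is to extract from Lemma \ref{lem-kaz} an integral identity that is incompatible with $X=\nabla^{g_{0}}f$ being nontrivial.

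First I would use that the class of conformal vector fields is a conformal invariant: the equation $\mathcal{L}_{X}g=\frac{2}{n}\diver(X)\,g$ is preserved under $g\mapsto e^{2u}g$ (the divergence being rescaled by the conformal factor), so $X=\nabla^{g_{0}}f$, conformal for $g_{0}$, is conformal for $g$ as well, and Lemma \ref{lem-kaz} applies with this $X$, giving $\int_{M}\ricc_{g}(\nabla^{g}f,X)\,dV_{g}=0$. Since rescaling the metric by $e^{2u}$ rescales the gradient by $e^{-2u}$, one has $\nabla^{g}f=e^{-2u}\nabla^{g_{0}}f=e^{-2u}X$, so $\nabla^{g}f$ is at every point a positive multiple of $X$; substituting this in the identity yields
\[
\int_{M}e^{2u}\,\ricc_{g}(\nabla^{g}f,\nabla^{g}f)\,dV_{g}=0 .
\]

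The main obstacle is to promote this vanishing integral to the conclusion $\nabla f\equiv 0$. Rewriting the integrand via the $\Yf$ equation $\ricc_{g}(\nabla^{g}f,\cdot)=\tfrac12 dR_{g}$ gives $e^{2u}\ricc_{g}(\nabla^{g}f,\nabla^{g}f)=\tfrac12 X(R_{g})$, so the identity merely reduces to $\int_{M}X(R_{g})\,dV_{g}=0$; but this last equality holds for every conformal field $X$ (it is the Bourguignon--Ezin identity, and in fact it is precisely what the proof of Lemma \ref{lem-kaz} establishes), so one must exploit the gradient character of $X$ in an essential way, not merely its conformality. The natural way to do this is to bring in the concircularity equation $\nabla^{2}_{g_{0}}f=\tfrac1n(\Delta_{g_{0}}f)\,g_{0}$ satisfied by $f$ on $(M,g_{0})$: by the classical structure theorem for such functions (Tashiro, \cite{tashiro}) it forces $(M,g_{0})$ to be isometric to a rotationally symmetric metric on $\SS^{n}$ with $f$ a radial function; running the integral identity above against this explicit description, together with the $\Yf$ equation, should then force $f$ to be constant, against nontriviality. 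I expect the delicate point to be controlling the a priori arbitrary conformal factor $e^{2u}$, since the conformal--gradient condition is not itself conformally invariant and the $g_{0}$--geometry must be propagated carefully through the conformal change; in the model case where $(M,g_{0})$ is the round sphere and $f$ is a first spherical harmonic, this final step is exactly Proposition \ref{pro-sphere}.
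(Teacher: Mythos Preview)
Your proposal has a genuine gap: after correctly deriving the weighted identity $\int_{M}e^{2u}\,\ricc_{g}(\nabla^{g}f,\nabla^{g}f)\,dV_{g}=0$, you propose to reduce to the round sphere via Tashiro and then argue ``explicitly'', but this step is never carried out. You write that it ``should then force $f$ to be constant'' while simultaneously flagging the uncontrolled conformal factor $e^{2u}$ as the delicate point; as it stands the argument stops precisely where the real work begins, and no mechanism is given for extracting a contradiction from the weighted integral.

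The paper takes a shorter and quite different route. It asserts that not merely the vector field $X=\nabla^{g_{0}}f$, but the function $f$ itself, is concircular for the new metric $g$, i.e.\ $\nabla_{g}^{2}f=\tfrac{\Delta_{g}f}{n}\,g$, so that $\nabla^{g}f$ is a conformal gradient field for $g$. One then applies Lemma~\ref{lem-kaz} with $\nabla^{g}f$ rather than with $X$, obtaining the \emph{unweighted} identity $\int_{M}\ricc_{g}(\nabla^{g}f,\nabla^{g}f)\,dV_{g}=0$; combining this with the integrated Bochner formula and the concircular equation gives
\[
0\,=\,\int_{M}\ricc_{g}(\nabla^{g}f,\nabla^{g}f)\,dV_{g}\,=\,\frac{n-1}{n}\int_{M}(\Delta_{g}f)^{2}\,dV_{g}\,,
\]
hence $f$ is constant. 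The two ingredients you are missing are the Bochner identity and the use of $\nabla^{g}f$ (rather than $X$) as the conformal test field in Lemma~\ref{lem-kaz}. Your caution was not misplaced, however: the passage from ``$X$ is conformal for $g$'' to ``$\nabla_{g}^{2}f=\tfrac{\Delta_{g}f}{n}\,g$'' is exactly the place where $\nabla^{g_{0}}f$ and $\nabla^{g}f$ must be distinguished, and the paper passes over it in one line; if you follow the paper's argument you should examine that step with the same care you showed above.
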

\begin{proof} Let $g\in[g_{0}]$. By the conformal invariance of equation \eqref{eq-cvf}, we have that $X=\nabla f$ is also a conformal vector field for $(M,g)$, i.e. the potential function $f$ satisfies
$$
\nabla^{2}f = \frac{\Delta f}{n} g \,,
$$
where all the covariant derivatives refer to the metric $g$. Integrating Bochner formula
$$
\frac{1}{2}\Delta|\nabla f|^{2} = |\nabla^{2}f|^{2}+\ricc(\nabla f, \nabla f) + \langle \nabla f, \nabla \Delta f \rangle
$$
on $M$, one obtain
$$
\int_{M} \ricc(\nabla f,\nabla f)\,dV = \int_{M}|\Delta f|^{2}\,dV - \int_{M}|\nabla^{2}f|^{2}\,dV = \frac{n-1}{n}\int_{M}|\Delta f|^{2}\,dV \,.
$$
Suppose now that $(M,g,f)\in\Yf$. Then, using Lemma \ref{lem-kaz} with $X=\nabla f$, we obtain $\Delta f=0$, i.e. $f$ is constant on $M$,  which is a contradiction.
\end{proof}

In particular, from this result Proposition \ref{pro-sphere} in Section \ref{sec-main}, namely we have the following:

\begin{proposition} If $f \in C^{\infty}(\SS^{n})$ is a first  spherical harmonic on the round sphere $(\SS^{n},g_0)$, then there are no conformal metrics $g\in[g_{0}]$ such that $(M,g,f)\in\Yf$.
\end{proposition}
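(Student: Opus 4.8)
The plan is to deduce Proposition \ref{pro-sphere} directly from Corollary \ref{cor-kazsphere}. The only thing to check is that a first spherical harmonic on $(\SS^n,g_0)$ is precisely (up to the trivial case) the potential of a nontrivial conformal gradient vector field, so that the hypothesis of Corollary \ref{cor-kazsphere} is met with $g_0$ the round metric and $X=\nabla f$.

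First I would recall the standard fact that the first nonzero eigenvalue of the Laplacian on the round sphere $(\SS^n,g_0)$ of unit radius equals $n$, with eigenspace spanned by the restrictions to $\SS^n\subset\RR^{n+1}$ of the linear coordinate functions; these are exactly the \emph{first spherical harmonics}. So if $f$ is such a function, then $\Delta f=-nf$ (with the sign convention making $\Delta$ negative on eigenfunctions; one adapts the sign to the convention of the paper). Next I would verify the classical Obata-type identity that on the round sphere any first spherical harmonic satisfies
\[
\nabla^2 f = -f\, g_0 = \frac{\Delta f}{n}\, g_0 \,.
\]
One clean way to see this: the linear function $\ell(x)=\langle a,x\rangle$ on $\RR^{n+1}$ has vanishing Euclidean Hessian, and a direct computation with the second fundamental form of $\SS^n\subset\RR^{n+1}$ (which is $g_0$ up to sign) gives $\nabla^2 (\ell|_{\SS^n}) = -(\ell|_{\SS^n})\,g_0$; tracing recovers $\Delta f=-nf$, consistent with the eigenvalue. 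Hence $X=\nabla f$ satisfies $\nabla^2 f=\frac{\Delta f}{n}g_0$, i.e. it is a conformal gradient vector field, and it is nontrivial since $f$ is nonconstant.

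Then the conclusion is immediate: Corollary \ref{cor-kazsphere}, applied with $(M,g_0)=(\SS^n,g_0)$ and this $X=\nabla f$, states that there is no metric $g\in[g_0]$ with $(\SS^n,g,f)\in\Yf$, which is exactly the assertion. I would also remark that this is sharp in the sense that first spherical harmonics are genuinely the obstruction-producing functions here, mirroring the classical Kazdan--Warner obstruction for the prescribed scalar curvature problem, as already noted in the introduction.

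The main ``obstacle'' is not really an obstacle: the entire content has been packaged into Corollary \ref{cor-kazsphere}, so the only substantive point is the identification of first spherical harmonics with conformal gradient fields on $\SS^n$, i.e. the Obata identity $\nabla^2 f = \frac{\Delta f}{n}g_0$. If one wants to keep the proof self-contained one should include the short computation of $\nabla^2(\ell|_{\SS^n})$ via the Gauss formula; otherwise it can simply be cited as a well-known fact. Care is needed only with the sign convention for $\Delta$ so that the eigenvalue statement $\Delta f=\pm nf$ matches the convention fixed in Section \ref{sec-def}.
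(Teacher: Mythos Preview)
Your proposal is correct and matches the paper's own argument: the proposition is deduced directly from Corollary \ref{cor-kazsphere}, the only point being that a first spherical harmonic on $(\SS^n,g_0)$ satisfies $\nabla^2 f=\frac{\Delta f}{n}g_0$ and hence gives a nontrivial conformal gradient vector field. Your additional verification of the Obata-type identity via the Gauss formula is a welcome explicit justification that the paper leaves implicit.
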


Note that, by a classical result of Tashiro \cite{tashiro}, every compact manifold supporting a nontrivial (nonvanishing) conformal gradient vector field is conformal to the round sphere $\SS^{n}$.

\subsection*{An example}

Let $I\subseteq \RR$ be an interval, $F\in C^{\infty}(I)$ a smooth positive function on $I$ and $(N,h)$ an $(n-1)$-dimensional manifold with Ricci curvature $\rho$. As in Section \ref{sec-hcf}, we consider the warped product manifold $\Big(M=I\times N, g=dt^{2}+F(t)h\Big)$. Letting the indices $i,j,k$ run through $1,\ldots,n-1$ and given a local chart $t=x^{0},x^{1},\ldots,x^{n-1}$ for $I\times N$, we have $g_{00}=1, g_{0i}=0$, $g_{ij}=F\,h_{ij}$ and the components of the Ricci tensor $\ricc$ are given by
\begin{align}\label{ricex}
R_{00}&=-\frac{n-1}{4}\Big(2q''+(q')^{2}\Big)\,,\quad R_{0i}=0 \,,\\\nonumber
R_{ij}&= \rho_{ij}-\frac{1}{4}e^{q}\Big(2q''+(n-1)(q')^{2}\Big) h_{ij} \,.
\end{align}
where $q=\log F$. Suppose that $(N,h)$ has constant scalar curvature $k$. Then, the scalar curvature of $(M,g)$ is given by
$$
R = -\frac{n-1}{4}\Big(4q''+n(q')^{2}\Big) + k e^{-q}\,.
$$
On the other hand, if the potential function $f$ is radial, then
$$
\ricc(\nabla f) = g^{00}R_{00}f' = -\frac{n-1}{4}\Big(2q''+(q')^{2}\Big)f' \,.
$$
Thus, equation \eqref{yf} is equivalent to the following ODE
$$
q'''+\frac{n}{2}q'q''+\frac{k}{n-1}e^{-q}q'=\frac{1}{2}\Big(2q''+(q')^{2}\Big)f' \,.
$$
Notice that this equation coincide with \eqref{eq145}. Hence, again the choice $k=0$ and
$$
q(t)=t^{2}\,\quad f(t)=\frac{n}{2}\log(1+t^{2})
$$
gives a solution to the equation. In this case we have that, given any $(n-1)$-dimensional Riemannian scalar flat manifold $(N,h)$, one has
$$
\Big(M=\RR\times N,\,g=dt^{2}+e^{t^{2}}h,\,f(t)=\frac{n}{2}\log(1+t^{2})\Big) \in \Yf \,.
$$
Moreover, if $(N,h)$ is not Ricci flat, it is easy to see that $(M,g,f)\notin\HCf$.

On the other hand, following the construction in Section \ref{sec-hcf}, given any compact $(n-1)$-dimensional manifold $(N,h)$ with constant positive scalar curvature $k>0$, we can construct a $f$-Yamabe metric on a compact manifold $M$ diffeomorphic to $\mathbb{S}^{1}\times N$. As before, if $(N,h)$ is not Einstein, then this solution $(M,g,f)\notin\HCf$.

\

\section{Nongradient canonical metrics} \label{sec-nongrad}

We provide here the complete generalization of the framework constructed in the previous sections to the {\em nongradient} setting. Again, the starting of our analysis are Ricci solitons, namely Riemannian manifolds $(M,g)$ for which there exists a vector field $X\in\mathfrak{X}(M)$ such that
$$
\ricc_{X}:=\ricc + \frac{1}{2}\mathcal{L}_{X}g = \lambda g
$$
for some constant $\lambda\in\RR$, where $\mathcal{L}_{X}g$ denotes the Lie derivative of the metric in the direction $X$. In this we case we say that $(M,g,X)\in \EX$. In this section we use the following notation: $E_{X}:=\ricc_{X}-\frac{R_{X}}{2}g$, $R_{X}:=R+\diver(X)$ and $\mathcal{A}^{X}$  the {\em antisymmetric part} of the $\nabla X$, i.e., in local coordinates, $\mathcal{A}^{X}_{ij}=X_{ij}-X_{ji}$, in such a way that $\nabla X = \frac{1}{2}\big(\mathcal{A}^{X}+\mathcal{L}_{X}g\big)$. If $X = \nabla f$ for some smooth potential function $f$, then the soliton is a gradient Ricci soliton ($\Ef$); note that, in this case, $\mathcal{A}^{X}=0$ and $\frac{1}{2}\mathcal{L}_{X}g= \nabla^{2}f$.

It follows from the work of Perelman \cite{perelman} (see \cite{emilanman} for a direct proof) that any compact Ricci soliton is actually a gradient Ricci soliton. In particular it is well known that, if $\lambda\leq 0$, then $(M,g,X)\in \E$. Moreover, Naber \cite{naber} has shown that any shrinking ($\lambda >0$) Ricci soliton with bounded curvature has a gradient soliton structure. On the other hand, steady ($\lambda =0$) and expanding ($\lambda<0$) Ricci solitons which do not support a gradient structure were found in \cite{lau, lot, baidan, bai}.

In order to introduce the nongradient counterparts of the $f$-canonical metrics that we have introduced in Definition \ref{ladefinizione}, we note that we have defined the classes $\HCf$ and $\Yf$ imposing the vanishing of the divergence of the ``weighted'' tensors $e^{-f}\riem$ and $e^{-f}\ricc$. Fortunately, we have shown in Section \ref{sec-frame} that these structures can be characterized using the tensor $\ricc_{f}$: this allows us to give the following

\begin{defi}\label{ladefinizioneX} Let $(M,g)$ be a $n$-dimensional, $n\geq 3$, Riemannian manifold with metric $g$. We say that the triple $(M,g,X)$ belongs to
\begin{itemize}

\item $\SFX$ ({\em $X$-space forms}) if there exist $X\in\mathfrak{X}(M)$ and $\lambda\in\RR$ such that
$$
\riem_{X} := \riem + \frac{1}{n-2}\Big(\frac{1}{2}\mathcal{L}_{X}g-\frac{\diver(X)}{2(n-1)}\,g\Big)\owedge g = \frac{\lambda}{2(n-1)} g \owedge g\,;
$$

\item $\LSEX$ ({\em $X$-locally symmetric Einstein metrics}) if there exist $X\in\mathfrak{X}(M)$ and $\lambda\in\RR$ such that
$$
\nabla \big(\riem_{X}\big) =0 \quad\hbox{and}\quad \ricc_{X} = \lambda g\,;
$$

\item $\EX$ ({\em Ricci solitons}) if there exist $X\in\mathfrak{X}(M)$ and $\lambda\in\RR$ such that
$$
\ricc_{X} = \lambda\,g \,;
$$

\item $\HCX$ ({\em $X$-harmonic curvature metrics}) if there exist $X\in\mathfrak{X}(M)$  such that
\begin{align*}
      \diver\sq{E_{X}\owedge g}=0  \,.
  \end{align*}
\item $\YX$ ({\em $X$-Yamabe metrics}) if there exist $X\in\mathfrak{X}(M)$  such that
$$
\nabla R = 2 \ricc(X,\cdot) + \diver(\mathcal{A}^{X})
$$
where $\diver(\mathcal{A}^{X})_{i}=\mathcal{A}^{X}_{ij,j}=X_{ij,j}-X_{ji,j}$.
\end{itemize}
Moreover, we say that $(M,g,f)$ belongs to
\begin{itemize}

\item $\LSX$ ({\em $X$-locally symmetric metrics}) if there exist $X\in\mathfrak{X}(M)$ such that
$$
\nabla \big(\riem_{X}\big) =0 \,;
$$

\item $\PRX$ ({\em metrics with parallel $X$-Ricci tensor}) if there exist $X\in\mathfrak{X}(M)$ such that
$$
\nabla \big(\ricc_{X}\big) =0  \,.
$$
\end{itemize}
\end{defi}

Note that, when $X=\nabla f$, we recover the corresponding sets in \eqref{cattedrale}; in this latter case, we say that the structure is {\em  gradient}. In particular, we have

\begin{equation*}
\begin{array}{ccccccccc}
\,& \,& \LSf & \subset & \PRf & \, & \, & \, \\
\,& \rotatebox{45}{$\subset$}& \cup & \, & \cup &\rotatebox{45}{$\cap$} &\, & \,\\
\SFf	&\subset &\LSEf &\subset	&\Ef	&\subset	&\HCf	&\subset	&\Yf\\
\cap	&\,			& \cap	&\,			& \cap		&\,		&\cap  & \, &\cap\\
\SFX	&\subset &\LSEX	&\subset &\EX	&\subset	&\HCX	&\subset	&\YX\\
\,& \rotatebox{-45}{$\subset$}& \cap & \, & \cap &\rotatebox{-45}{$\cup$} &\, & \,\\
\,& \,& \LSX & \subset & \PRX & \, & \, & \, \\
\end{array}
\end{equation*}

\subsection*{The class $\SFX$}

Using the constancy of $R_{X}=R+\diver(X)$, which follows tracing twice the definition equation, we have
  \begin{equation*}
  (M, g, f) \in \SFX \quad  \Longleftrightarrow \quad \riem_X = \frac{\lambda}{2(n-1)} g \owedge g \quad \Longleftrightarrow \quad \begin{cases} W=0 \\ \ricc_X=\lambda g\end{cases}
  \end{equation*}
  Note  that $\SFX \subset \EX$; moreover, in dimension $n\geq 4$ every $X$-space form is a locally conformally flat Ricci soliton. In particular, using the results in \cite{catmanmazpde}, the analogue of Proposition \ref{pro-sff} holds.

\subsection*{The classes $\LSX$ and $\LSEX$}
One has
  \begin{equation*}
\nabla\operatorname{Riem_{X}} = \nabla W +\frac{1}{n-2} \nabla \big(\operatorname{A_{X}} \owedge g\big)
\end{equation*}
where $A_{X}:=\ricc_{X}-\frac{R_{X}}{2(n-1)}g$. Moreover, $\nabla A_{X}=0$ implies the constancy of $R_{X}$, and is thus equivalent to $\nabla\ricc_{X}=0$. By orthogonality,

  \begin{equation*}
  (M, g, f) \in \LSX \quad  \Longleftrightarrow \quad\nabla\riem_X =0 \quad \Longleftrightarrow \quad\begin{cases}\nabla W=0  \\ \nabla \ricc_X=0\end{cases}
  \end{equation*}
and, obiouvsly,
  \begin{equation*}
  (M, g, f) \in \LSEX \quad  \Longleftrightarrow \quad\begin{cases}\nabla\riem_X =0 \\ \ricc_X=\lambda g \end{cases}\quad \Longleftrightarrow \quad\begin{cases}\nabla W=0  \\ \ricc_X=\lambda g\end{cases}
  \end{equation*}
Even in this more general situation, the analogue of Proposition \ref{pro-lsf} holds. Note that, for the $\LSEX$, one has to use general results for homothetic vector fields contained, for instance, in \cite{tashiro}.

  \subsection*{The class $\HCX$}

By definition
 \begin{align*}
      (M,g,X)\in\HCX \quad\Longleftrightarrow \quad  \diver\sq{E_{X}\owedge g}=0\,,
  \end{align*}
where $E_{X}=\ricc_{X}-\frac{R_{X}}{2}g$ and $R_{X}=R+\diver(X)$.
We claim that
  \begin{align*}
      (M,g,X)\in\HCX \,\Longleftrightarrow\, \ricc_{X} \text{ is a Codazzi tensor } \,\Longleftrightarrow\, \begin{cases}
         W_{tijk,t} = \pa{\frac{n-3}{n-2}}\Big(X_{t}W_{tijk} - D^{X}_{ijk}\Big) \\ \nabla R = 2 \ricc(X,\cdot) + \diver(\mathcal{A}^{X})
      \end{cases}
  \end{align*}
where
  \begin{align*}
  D^X_{ijk} &= \frac{1}{n-2}\pa{X_kR_{ij}-X_jR_{ik}}+\frac{1}{(n-1)(n-2)}\pa{X_tR_{tk}\delta_{ij}-X_tR_{tj}\delta_{ik}}\\ \nonumber
 & -\frac{R}{(n-1)(n-2)}\pa{X_k\delta_{ij}-X_j\delta_{ik}}\\ \nonumber &+\frac 12\pa{X_{kji}-X_{jki}}+\frac{1}{2(n-1)}\sq{\pa{X_{tkt}-X_{ktt}}\delta_{ij}-\pa{X_{tjt}-X_{jtt}}\delta_{ik}}.
\end{align*}
This definition follows from a previous work of the authors \cite{catmasmonrig}, where we derived the so called integrability conditions for nongradient Ricci solitons.

Assume $\diver\sq{E_{X}\owedge g}=0$. From equation \eqref{eq-divkn} one has
$$
(E_{X})_{tj,t} \delta_{ik}-(E_{X})_{tk,t} \delta_{ij} = (E_{X})_{ij,k} -(E_{X})_{ik,j}\,.
$$
Tracing,
$$
\diver(E_{X})=\frac{1}{2}\nabla R_{X} \quad \Longleftrightarrow \quad \diver(\ricc_{X})=\nabla R_{X}\,.
$$
A simple computation now shows that
$$
 \diver\sq{E_{X}\owedge g}=0 \quad \Longleftrightarrow\quad (\ricc_{X})_{ij,k} -(\ricc_{X})_{ik,j} =0 \,,
$$
i.e. $\ricc_{X}$ is a Codazzi tensor.

We prove now the second equivalence. Assume that $\ricc_{X}$ is a Codazzi tensor. Then, by definition, we have
\begin{equation}\label{eq1234}
(\ricc_{X})_{ij,k}=(\ricc_{X})_{ik,j} \quad \Longleftrightarrow\quad R_{ij,k}+\frac{1}{2}\big((X_{ijk}+X_{jik}\big)=R_{ik,j}+\frac{1}{2}\big((X_{ikj}+X_{kij}\big)\,.
\end{equation}
In particular, tracing with respect to $i,j$, we deduce that
\begin{align*}
R_{k}&=\big(X_{ktt}+X_{tkt}\big)-2X_{ttk} \\
&= \big(X_{ktt}+X_{tkt}\big)-2X_{tkt}+2X_{t}R_{tk}\\
&= 2X_{t}R_{tk}+\big(X_{ktt}-X_{tkt}\big)\\
&= 2X_{t}R_{tk}+\mathcal{A}^{X}_{kt,t} \,,
\end{align*}
i.e.
\begin{equation}\label{eq98}
\nabla R = 2 \ricc(X,\cdot) + \diver(\mathcal{A}^{X}) \,.
\end{equation}
Moreover, going back to \eqref{eq1234}, one has
$$
R_{ij,k}-R_{ik,j}=\frac{1}{2}\big(X_{ikj}-X_{ijk}\big)+\frac{1}{2}\big(X_{kij}-X_{jik}\big) \,.
$$
Now we have, using again the commutation rule $X_{ijk}-X_{ikj}=X_{t}R_{tijk}$ and Bianchi identities
$$
R_{ij,k}-R_{ik,j}=C_{ijk}+\frac{1}{2(n-1)}\big(R_{k}\delta_{ij}-R_{j}\delta_{ik}\big) \,,
$$
$$
\frac{1}{2}\big(X_{ikj}-X_{ijk}\big) = \frac{1}{2}X_{t}R_{tikj}
$$
and
$$
\frac{1}{2}\big(X_{kij}-X_{jik}\big) = \frac{1}{2}\big(X_{kji}-X_{jki}\big)+X_{t}R_{tikj}\,.
$$
Thus
$$
C_{ijk}+\frac{1}{2(n-1)}\big(R_{k}\delta_{ij}-R_{j}\delta_{ik}\big) = X_{t}R_{tikj} + \frac{1}{2}\mathcal{A}^{X}_{kj,i}\,.
$$
Inserting in the previous relation the decomposition of the curvature tensor and equation \eqref{eq98}, we obtain
$$
C_{ijk}+ X_{t}W_{tikj} =D^{X}_{ijk}\,,
$$
since $D^{X}$ can be written using $\mathcal{A}^{X}$ as follows
\begin{align*}
  D^X_{ijk} &= \frac{1}{n-2}\pa{X_kR_{ij}-X_jR_{ik}}+\frac{1}{(n-1)(n-2)}\pa{X_tR_{tk}\delta_{ij}-X_tR_{tj}\delta_{ik}}\\ \nonumber
 & -\frac{R}{(n-1)(n-2)}\pa{X_k\delta_{ij}-X_j\delta_{ik}}\\ \nonumber &+\frac 12 \mathcal{A}^{X}_{kj,i}-\frac{1}{2(n-1)}\big(\mathcal{A}^{X}_{kt,t}\delta_{ij}-\mathcal{A}^{X}_{jt,t}\delta_{ik}\big) \,.
\end{align*}

Equation \eqref{def_Cotton_comp_Weyl} immediately implies
  \begin{align*}
      \ricc_{X} \text{ is a Codazzi tensor } \quad\Longleftrightarrow\quad \begin{cases}
         W_{tijk,t} = \pa{\frac{n-3}{n-2}}\Big(X_{t}W_{tijk} - D^{X}_{ijk}\Big) \\ \nabla R = 2 \ricc(X,\cdot) + \diver(\mathcal{A}^{X})
      \end{cases}
  \end{align*}
From the equivalence
 $$
 (M,g,X)\in\HCX \quad\Longleftrightarrow \quad \ricc_{X} \text{ is a Codazzi tensor }
  $$
and the fact that compact Ricci solitons are gradient, it follows that all the results concerning compact $\HCf$ metrics in Section \ref{sec-main} can be extended to the nongradient setting, defining the class $\HCXl$ in the natural way.

 \subsection*{The class $\YX$}
  In analogy with the gradient case, a simple computation shows that
   \begin{align*}
      (M,g,f)\in \YX \quad \Longleftrightarrow \quad \diver\pa{\ricc_{X}-R_{X} g}=0 \,.
        \end{align*}
Moreover, we can prove the following obstruction result which extend to the nongradient setting Corollary \ref{cor-kazsphere}.

\begin{proposition} Let $(M,g_{0})$ be a compact Riemannian manifold and $X\in\mathfrak{X}(M)$ be a non-Killing conformal vector field on $(M,g_{0})$. Then, there are no conformal metrics $g\in[g_{0}]$ such that $(M,g,X)\in\YX$.
\end{proposition}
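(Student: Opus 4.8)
The plan is to follow the scheme of the proof of Corollary~\ref{cor-kazsphere}, using the Bochner-type identity for vector fields (Lemma~\ref{lem-boch}) in place of the Bochner formula for functions. Since the property of being a conformal vector field, i.e. condition~\eqref{eq-cvf}, is conformally invariant, $X$ is a conformal vector field for \emph{every} $g\in[g_{0}]$. So I would fix such a $g$, assume by contradiction that $(M,g,X)\in\YX$, i.e.
\[
\nabla R = 2\ricc(X,\cdot)+\diver\big(\mathcal{A}^{X}\big),
\]
and work from now on with all quantities computed with respect to $g$.

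The first step is the exact analogue of Lemma~\ref{lem-kaz}: for every conformal vector field $Y$ on the compact manifold $(M,g)$ one has $\int_{M}\langle\nabla R,Y\rangle\,dV=0$. This follows just as in Lemma~\ref{lem-kaz}: one writes $R_{i}=\tfrac{2n}{n-2}\,\rdc_{ij,j}$ via the contracted Bianchi identity for the trace-free Ricci tensor $\rd$, integrates by parts, and uses $Y_{ij}+Y_{ji}=\tfrac{2\diver(Y)}{n}g_{ij}$ together with $\rdc_{ii}=0$. Contracting the $\YX$ equation with $X$ itself and integrating, the left-hand side then drops out and we are left with
\[
2\int_{M}\ricc(X,X)\,dV+\int_{M}\langle\diver(\mathcal{A}^{X}),X\rangle\,dV=0 .
\]

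Next I would evaluate the two integrals. An integration by parts, combined with $\nabla X=\tfrac12\big(\mathcal{A}^{X}+\mathcal{L}_{X}g\big)$ and the pointwise orthogonality of symmetric and skew-symmetric $2$-tensors, gives $\int_{M}\langle\diver(\mathcal{A}^{X}),X\rangle\,dV=-\tfrac12\int_{M}|\mathcal{A}^{X}|^{2}\,dV$. For the other integral I would integrate the identity of Lemma~\ref{lem-boch} over the closed $M$, using that $X$ is conformal, so that $\diver(\mathcal{L}_{X}g)=\tfrac2n\nabla(\diver X)$ and $\int_{M}\nabla_{X}(\diver X)\,dV=-\int_{M}(\diver X)^{2}\,dV$; this yields
\[
\int_{M}\ricc(X,X)\,dV=\int_{M}|\nabla X|^{2}\,dV+\frac{n-2}{n}\int_{M}(\diver X)^{2}\,dV
=\frac14\int_{M}|\mathcal{A}^{X}|^{2}\,dV+\frac{n-1}{n}\int_{M}(\diver X)^{2}\,dV ,
\]
where the last equality again uses $\nabla X=\tfrac12(\mathcal{A}^{X}+\mathcal{L}_{X}g)$ and $|\mathcal{L}_{X}g|^{2}=\tfrac4n(\diver X)^{2}$. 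Substituting the two evaluations into the identity above, the $|\mathcal{A}^{X}|^{2}$ terms cancel and one obtains $\tfrac{2(n-1)}{n}\int_{M}(\diver X)^{2}\,dV=0$. Hence $\diver X\equiv0$ on $M$, and since $X$ is conformal this forces $\mathcal{L}_{X}g=0$: the field $X$ is Killing for $g$, which contradicts the hypothesis that $X$ is non-Killing. (This is the precise analogue of the conclusion ``$f$ constant'' in Corollary~\ref{cor-kazsphere}.)

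The calculations are routine; the point that needs care is the bookkeeping of the two integrations by parts, and in particular the fact that the $|\mathcal{A}^{X}|^{2}$ contributions coming from $\int\langle\diver(\mathcal{A}^{X}),X\rangle$ and from $\int\ricc(X,X)$ cancel exactly, leaving only the manifestly non-negative term $\int(\diver X)^{2}$ — this cancellation is what makes the argument close. The only non-computational ingredient is the conformal invariance of~\eqref{eq-cvf}, which is classical.
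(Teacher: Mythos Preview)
Your argument is correct and essentially identical to the paper's proof: both use conformal invariance of \eqref{eq-cvf}, the Kazdan--Warner identity to kill $\int\langle\nabla R,X\rangle$, the integrated Bochner formula of Lemma~\ref{lem-boch} to evaluate $\int\ricc(X,X)$, and an integration by parts for $\int\langle\diver(\mathcal{A}^{X}),X\rangle$, arriving at $\int(\diver X)^{2}=0$. The only cosmetic difference is that you reorganize the bookkeeping in terms of $|\mathcal{A}^{X}|^{2}$ to display the cancellation explicitly, whereas the paper leaves the computation in terms of $|\nabla X|^{2}$ and $X_{ij}X_{ji}$.
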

\begin{proof}  Let $g\in[g_{0}]$. By the conformal invariance of equation \eqref{eq-cvf}, we have that $X$ is also a conformal vector field for $(M,g)$. Assume that $(M,g,X)\in\YX$, i.e.
$$
\nabla R = 2 \ricc(X,\cdot) + \diver(\mathcal{A}^{X})
$$
where $\diver(\mathcal{A}^{X})_{i}=\mathcal{A}^{X}_{ij,j}=X_{ij,j}-X_{ji,j}$. By Kazdan-Warner identity, we have
\begin{equation}\label{eq-12345}
\int_{M}\ricc(X,X)\,dV + \frac{1}{2}\int_{M}\langle \diver(\mathcal{A}^{X}), X\rangle\,dV \,=\, 0 \,.
\end{equation}
Integrating Bochner formula in Lemma \ref{lem-boch} and using the conformal vector field equation, one has
$$
\int_{M}\ricc(X,X)\,dV = \int_{M}|\nabla X|^{2}\,dV+\frac{n-2}{n}\int_{M}|\diver(X)|^{2}\,dV \,.
$$
On the other hand
\begin{align*}
\frac{1}{2}\int_{M}\langle \diver(\mathcal{A}^{X}), X\rangle\,dV &= -\frac{1}{2}\int_{M}(X_{ij}-X_{ji})X_{ij} \\
&= -\frac{1}{2}\int_{M}|\nabla X|^{2}\,dV + \frac{1}{2}\int_{M}X_{ij}X_{ji}\,dV \\
&= -\int_{M}|\nabla X|^{2}\,dV + \frac{1}{n}\int_{M}|\diver(X)|^{2}\,dV \,.
\end{align*}
Putting these identity in \eqref{eq-12345}, we obtain $\diver(X)=0$. Thus $X$ must be a Killing vector field, and this contradicts the assumption.
\end{proof}

\

\section{Final remarks and open problems}

To conclude, we present a short list of comments and open questions, which could be the subject of further investigations.

\begin{itemize}

\item[1.] In section \ref{sec-yf} we showed an obstruction to problem (B), that is a possible generalization of the {Yamabe problem} related to the class $\Yf$. A simple computation using the conformal changes of the scalar and Ricci curvature (see e.g. \cite{besse}) shows that $(M,\widetilde{g},f)=(M, e^{2u}g,f) \in \Yf$ if and only if the function $u$ solves the PDE
    \begin{align}\label{YamabefPDE}
      \nabla \Delta u &+ (n-2)\nabla^{2}u(\nabla u,\cdot)-\Big(2\Delta u + (n-2)|\nabla u|^{2} - \frac{1}{n-1}R\Big) \nabla u  -\frac{1}{2(n-1)}\nabla R  \\ \nonumber
     &= -\ricc(\nabla f, \cdot) +\frac{n-2}{n-1}\nabla^{2}u(\nabla f,\cdot) + \frac{1}{n-1}\Big(\Delta u +(n-2)|\nabla u|^{2}\Big)\nabla f
- \frac{n-2}{n-1}\langle \nabla u,\nabla f\rangle \nabla u. \,  
    \end{align}
Note that, commuting the first term, this equation is \emph{second} order problem with respect to the gradient of the conformal factor $u$. Now we ask: there exist sufficient conditions on $\nabla f$ to ensure that (B) has a positive answer, or, equivalently, to ensure the existence of solution of \eqref{YamabefPDE}? Are there other obstructions to the latter, different from Corollary \ref{cor-kazsphere}? As far as the problem (A) is concerned, are there any obstructions at all?
    Clearly, all the previous questions could also be asked for the class $\YX$.

\item[2.] In Sections \ref{sec-hcf} and \ref{sec-yf} we constructed some examples of $\HCf$ and $\Yf$ metrics, respectively, using warped products. Can we construct other examples, apart from gradient Ricci solitons $\Ef$, possibly ``non-warped''? Can we construct examples of $\HCfl$?
    Moreover, what can we say in the nongradient cases $\HCX$ and $\YX$? Since compact Ricci solitons $\EX$ are gradient, it would be interesting to construct a compact example of $\HCX$ metric, with $X$ ``genuinely'' nongradient (that is, not of the form $X=\nabla f+Y$, where $Y$ is a Killing vector field). Compare also with 5. below.

\item[3.] In the positive (sectional) curvature case we have seen, in Proposition \ref{pro-sec}, that the class $\HCfl$ coincide with the one of gradient Ricci solitons $\Ef$. Are there other characterizations? What is the role of the so-called Hamilton identity $R+\abs{\nabla f}^2-2\lambda f=C$,  $C\in \mathds{R}$, which is valid for gradient Ricci solitons?

\item[4.] Inspired by the classification results for Ricci solitons, we could study, for instance, the following problems:
\begin{itemize}
  \item[a.] If $M$ is compact and $(M, g, f) \in \HCfl$, with $\lambda\leq 0$, is it true that $(M, g)\in \E$?
  \item[b.] If $M$ is three dimensional, compact and $(M, g, f) \in \HCfl$, with $\lambda> 0$, is it true that $(M, g)$ is isometric, up to quotients, to the round sphere $\SS^3$? More generally, can we classify complete $3$-dimensional manifolds which belong to the class $\HCfl$ with $\lambda>0$?
  \item[c.] If $(M, g)$ is compact and locally conformally flat,  and $(M, g, f) \in \HCfl$, is it true that $(M, g)\in \SF$? Note that, since in section \ref{sec-hcf} we constructed locally conformally flat examples of $\HCf$ metrics,  for the latter class this result is clearly false.
  \item[d.] Gradient Ricci solitons can be classified by imposing conditions on the Weyl tensor which are weaker than local conformal flatness (e.g. harmonic Weyl curvature \cite{fergar}, Bach flatness \cite{caoche2}, higher order vanishing conditions \cite{catmasmondiv}). Can we prove similar results for $\HCf$ metrics?
\end{itemize}

\item[5.] As we saw in Section \ref{sec-nongrad}, compact Ricci solitons $\EX$ are gradient. What can we say about the classes $\HCX$, $\HCXl$ and $\YX$ in the compact case? Are there natural geometric conditions ensuring the ``gradientness'' for these classes (even in the noncompact setting)?

%
%\item[6.] Other classes: Weighted Harmonic Weyl, $f$-Bach flat? etc.
%

\end{itemize}

\

\

\begin{ackn} The authors are members of the Gruppo Nazionale per l'Analisi Matematica, la Probabilit\`{a} e le loro Applicazioni (GNAMPA) of the Istituto Nazionale di Alta Matematica (INdAM) and they are supported by GNAMPA project ``Strutture di tipo Einstein e Analisi Geometrica su variet\`a Riemanniane e Lorentziane''.
\end{ackn}

\

\

\bibliographystyle{abbrv}

\bibliography{WHC}

\

\end{document}